\documentclass[10pt,twoside,a4paper]{article}
\usepackage{amsfonts,amssymb,amsmath,amscd,latexsym,makeidx,theorem}
\author{Luca Martinazzi\thanks{This work was supported by ETH Research Grant no. ETH-02 08-2.}
\\ \small ETH Zurich \\ \small R\"amistrasse 101, CH-8092 \\ \small \texttt{luca@math.ethz.ch}
}
\title{Concentration-compactness phenomena in the higher order Liouville's equation}
\date{September 12, 2008}

\newtheorem{trm}{Theorem}
\newtheorem{prop}[trm]{Proposition}
\newtheorem{cor}[trm]{Corollary}
\newtheorem{lemma}[trm]{Lemma}

\theorembodyfont{\upshape}

\theorembodyfont{\upshape}

\theorembodyfont{\upshape}

\newcommand{\R}[1]{\mathbb{R}^{#1}}

\newcommand{\de}{\partial}

\newcommand{\bs}{\backslash}
\newcommand{\dvol}{\,\mathrm{dvol}}

\newcommand{\M}[1]{\mathcal{#1}}
\newcommand{\ring}{r_{\mathrm{inj}}}
\newenvironment{proof}{\noindent\emph{Proof.}}{\hfill$\square$\medskip}

\DeclareMathOperator{\diver}{div}
\DeclareMathOperator{\loc}{loc}

\DeclareMathOperator*{\dist}{dist}

\DeclareMathOperator*{\Intm}{\int\!\!\!\!\!\! \rule[2.6pt]{6.5pt}{.4pt}}
\DeclareMathOperator{\intm}{\int\!\!\!\!\!\!\: --}
\DeclareMathOperator{\vol}{vol}

\begin{document}
\maketitle

\begin{abstract}
We investigate different concentration-compactness phenomena related to the $Q$-curvature in arbitrary even dimension. We first treat the case of an open domain in $\R{2m}$, then that of a closed manifold and, finally, the particular case of the sphere $S^{2m}$. In all cases we allow the sign of the $Q$-curvature to vary, and show that in the case of a closed manifold, contrary to the case of open domains in $\R{2m}$, concentration phenomena can occur only at points of positive $Q$-curvature. As a consequence, on a locally conformally flat manifold of non-positive Euler characteristic we always have compactness.
\end{abstract}

\section{Introduction and statement of the main results}

Before stating our results, we recall a few facts concerning the Paneitz operator $P^{2m}_g$ and the $Q$-curvature $Q^{2m}_g$ on a $2m$-dimensional smooth Riemannian manifold $(M,g)$. Introduced in \cite{BO}, \cite{pan}, \cite{bra} and \cite{GJMS}, the Paneitz operator and the $Q$-curvature are the higher order equivalents of the Laplace-Beltrami operator and the Gaussian curvature respectively ($P^2_g=-\Delta_g$ and $Q^2_g=K_g$), and they now play a central role in modern conformal geometry.
For their definitions and more related information we refer to \cite{cha}. Here we only recall a few properties which shall be used later.
First of all we have the Gauss formula, describing how the $Q$-curvature changes under a conformal change of metric:
\begin{equation}\label{gauss}
P^{2m}_g u + Q^{2m}_g=Q^{2m}_{g_u}e^{2mu},
\end{equation}
where $g_u:=e^{2u}g$, and $u\in C^\infty(M)$ is arbitrary.
Then, we have the conformal invariance of the total $Q$-curvature, when $M$ is closed:
\begin{equation}\label{total}
\int_M Q_{g_u}^{2m}\mathrm{dvol}_{g_{u}}=\int_M Q_g^{2m} \mathrm{dvol}_g.
\end{equation}
Finally, assuming $(M,g)$ closed and locally conformally flat , we have the Gauss-Bonnet-Chern formula (see e.g. \cite{che}, \cite{cha}):
\begin{equation}\label{GBC}
\int_M Q^{2m}_g \mathrm{dvol}_g =\frac{\Lambda_1}{2}\chi(M),
\end{equation}
where $\chi(M)$ is the Euler-Poincar\'e characteristic of $M$ and
\begin{equation}\label{lambda1}
\Lambda_1:=\int_{S^{2m}} Q_{g_{S^{2m}}}\mathrm{dvol}_{g_{S^{2m}}}=(2m-1)!|S^{2m}|
\end{equation}
is a constant which we shall meet often in the sequel. In the $4$-dimensional case, if $(M,g)$ is not locally conformally flat, we have
\begin{equation}\label{GBC4}
\int_M\bigg(Q^4_g+\frac{|W_g|^2}{4}\bigg)\dvol_g=8\pi^2\chi(M),
\end{equation}
where $W_g$ is the Weyl tensor. Recently S. Alexakis \cite{ale2} (see also \cite{ale}) proved an analogous to \eqref{GBC4} for $m\geq 3$:
\begin{equation}\label{GBC5}
\int_M\bigg(Q^{2m}_g+W\bigg)\dvol_g=\frac{\Lambda_1}{2}\chi(M),
\end{equation}
where $W$ is a local conformal invariant involving the Weyl tensor and its covariant derivatives.

\medskip

We can now state the main problem treated in this paper. Given a $2m$-dimensional Riemannian manifold $(M,g)$, consider a converging sequence of functions $Q_k \to Q_0$ in $C^0(M)$, and let $g_k:=e^{2u_k}g$ be conformal metrics satisfying $Q^{2m}_{g_k}=Q_k$. In view of \eqref{gauss}, the $u_k$'s satisfy the following elliptic equation of order $2m$ with critical exponential non-linearity
\begin{equation}\label{presc}
P^{2m}_g u_k +Q^{2m}_g=Q_ke^{2mu_k}.
\end{equation} Assume further that there is a constant $C>0$ such that
\begin{equation}\label{volume}
\mathrm{vol}(g_k)=\int_{M}e^{2mu_k}\dvol_{g}\leq C\quad \textrm{for all }k.
\end{equation}
What can be said about the compactness properties of the sequence $(u_k)$? 

In general non-compactness has to be expected, at least as a consequence of the non-compactness of the M\"obius group on $\R{2m}$ or $S^{2m}$. For instance, for every $\lambda>0$ and $x_0\in \R{2m}$, the metric on $\R{2m}$ given by $g_u:=e^{2u}g_{\R{2m}}$, $u(x):=\log\frac{2\lambda}{1+\lambda^2|x-x_0|^2}$, satisfies $Q^{2m}_{g_u}\equiv (2m-1)!$.

\medskip

We start by considering the case when $(M,g)$ is an open domain $\Omega\subset\R{2m}$ with Euclidean metric $g_{\R{2m}}$. Since $P_{g_{\R{2m}}}=(-\Delta)^m$ and $Q_{g_{\R{2m}}}\equiv 0$, Equation \eqref{presc} reduces to $(-\Delta)^m u_k=Q_ke^{2mu_k}$. The compactness properties of this equation were studied in dimension $2$ by Br\'ezis and Merle \cite{BM}. They proved that if $Q_k\geq 0$, $\|Q_k\|_{L^\infty}\leq C$ and $\|e^{2u_k}\|_{L^1}\leq C$, then up to selecting a subsequence, one of the following is true:
\begin{itemize}
\item[(i)] $(u_k)$ is bounded in $L^\infty_{\loc}(\Omega)$.
\item[(ii)] $u_k\to-\infty$ locally uniformly in $\Omega$.
\item[(iii)] There is a finite set $S=\{x^{(i)};i=1,\ldots,I\}\subset \Omega$ such that $u_k\to -\infty$ locally uniformly in $\Omega\backslash S$. Moreover $Q_k e^{2u_k}\rightharpoonup \sum_{i=1}^I \beta_i \delta_{x^{(i)}}$ weakly in the sense of measures, where $\beta_i\geq 2\pi$ for every $1\leq i\leq I$.
\end{itemize}
Subsequently, Li and Shafrir \cite{LS} proved that in case (iii) $\beta_i\in 4\pi\mathbb{N}$ for every $1\leq i\leq I$.

Adimurthi, Robert and Struwe \cite{ARS} studied the case of dimension $4$ ($m=2$). As they showed, the situation is more subtle because the blow-up set (the set of points $x$ such that $u_k(x)\to\infty$ as $k\to\infty$) can have dimension up to $3$ (in contrast to the finite blow-up set $S$ in dimension $2$). Moreover, as a consequence of a result of Chang and Chen \cite{CC}, quantization in the sense of Li-Shafrir does not hold anymore, see also \cite{rob1}, \cite{rob2}.

In the following theorem we extend the result of \cite{ARS} to arbitrary even dimension (see also Proposition \ref{propblowup} below). The function $a_k$ in \eqref{eqliou} has no geometric meaning, and one can take $a_k\equiv 1$ at first. On the other hand, one can also apply Theorem \ref{main} to non-geometric situations, by allowing $a_k\not\equiv 1$, see \cite{mar3}.

\begin{trm}\label{main} Let $\Omega$ be a domain in $\R{2m}$, $m>1$, and let $(u_k)_{k\in \mathbb{N}}$ be a sequence of functions satisfying
\begin{equation}\label{eqliou}
(-\Delta)^m u_k=Q_k e^{2m a_ku_k},
\end{equation}
where $a_k,Q_0\in C^0(\Omega)$, $Q_0$ is bounded, and $Q_k\rightarrow Q_0$, $a_k\to 1$ locally uniformly. Assume that
\begin{equation}\label{vk}
\int_{\Omega} e^{2m a_k u_k}dx\leq C,
\end{equation}
for all $k$ and define the finite (possibly empty) set 
$$S_1:=\bigg\{x\in\Omega : \lim_{r\to 0^+}\limsup_{k\to\infty}\int_{B_r(x)}|Q_k|e^{2ma_ku_k}dy\geq \frac{\Lambda_1}{2} \bigg\}=\big\{x^{(i)}: 1\leq i\leq I\big\},$$
where $\Lambda_1$ is as in \eqref{lambda1}. Then one of the following is true.
\begin{enumerate}
\item[(i)] For every $0\leq\alpha<1$, a subsequence converges in $C^{2m-1,\alpha}_{\loc}(\Omega\backslash S_1)$.

\item[(ii)] There exist a subsequence, still denoted by $(u_k)$, and a closed nowhere dense set $S_0$ of Hausdorff dimension at most $2m-1$ such that, letting $S=S_0\cup S_1,$
we have $u_k\rightarrow -\infty$  locally uniformly in $\Omega\bs S$ as $k\rightarrow \infty.$ Moreover there is a sequence of numbers $\beta_k\rightarrow \infty$ such that
$$\frac{u_k}{\beta_k}\rightarrow \varphi \textrm{ in } C^{2m-1,\alpha}_{\loc}(\Omega\backslash S),\quad 0\leq \alpha<1,$$
where $\varphi\in C^\infty(\Omega\bs S_1)$, $S_0=\{x\in\Omega:\varphi(x)=0\}$, and
$$(-\Delta)^m\varphi\equiv 0,\quad \varphi\leq 0,\quad \varphi \not\equiv 0\quad \textrm{in }\Omega\backslash S_1.$$
\end{enumerate}
If $S_1\neq \emptyset$ and $Q_0(x^{(i)})>0$ for some $1\leq i\leq I$, then case (ii) occurs.
\end{trm}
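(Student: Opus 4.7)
The plan is to follow the Brezis-Merle/Adimurthi-Robert-Struwe strategy, with three building blocks tailored to $(-\Delta)^m$: a local $\varepsilon$-regularity estimate, a divergence dichotomy, and an obstruction argument for the last assertion.

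For the $\varepsilon$-regularity, fix $x_0\in\Omega\setminus S_1$ and a ball $B_{2r}(x_0)$ disjoint from $S_1$. I would decompose $u_k=v_k+h_k$ on $B_{2r}(x_0)$, where $v_k$ is the Riesz potential solving $(-\Delta)^m v_k=Q_k e^{2ma_ku_k}$ with vanishing Dirichlet data and $h_k$ is the polyharmonic remainder. A Jensen-type estimate based on the fundamental solution $\Gamma(x)=-\tfrac{2}{\Lambda_1}\log|x|$ of $(-\Delta)^m$ on $\R{2m}$ yields $\int_{B_{2r}} e^{p v_k}\,dx\leq C$ for some $p>1$ as soon as $\|Q_k e^{2ma_ku_k}\|_{L^1(B_{2r})}<\Lambda_1/2$, which is precisely what membership in $\Omega\setminus S_1$ furnishes after shrinking $r$ and extracting. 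For $h_k$, the mean-value property of polyharmonic functions combined with a Robert-style slicing --- choosing an annulus on which all the intermediate quantities $\int|\Delta^j u_k|$, $j<m$, are simultaneously controlled --- gives $L^\infty(B_r(x_0))$ bounds on $h_k^+$. Summing, $u_k^+$ is locally bounded on $\Omega\setminus S_1$, $(-\Delta)^m u_k$ lies in $L^q_{\loc}$ for some $q>1$, and standard elliptic regularity upgrades this to $C^{2m-1,\alpha}_{\loc}$ control modulo possible divergence of $u_k$ from below.

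From this one-sided control the dichotomy is standard: either $u_k$ is also locally bounded from below on $\Omega\setminus S_1$ --- a diagonal subsequence then gives case~(i) --- or $\beta_k:=-\inf_{K_k} u_k\to\infty$ along an exhaustion $K_k\nearrow\Omega\setminus S_1$. In the latter case I would set $v_k:=u_k/\beta_k$; since $u_k^+$ is locally bounded, $(-\Delta)^m v_k=\beta_k^{-1}Q_k e^{2ma_ku_k}\to 0$ in $L^q_{\loc}(\Omega\setminus S_1)$, so $v_k\to\varphi$ in $C^{2m-1,\alpha}_{\loc}(\Omega\setminus S_1)$ with $\varphi$ polyharmonic, $\varphi\leq 0$, and $\varphi\not\equiv 0$ after a suitable normalisation of $\beta_k$. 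Real-analyticity of polyharmonic functions then forces $S_0:=\{\varphi=0\}$ to be a real-analytic subset of Hausdorff dimension at most $2m-1$; on $\Omega\setminus(S_0\cup S_1)$ one has $\varphi<0$, so $u_k=\beta_k v_k\to-\infty$ locally uniformly, giving case~(ii).

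For the final claim, assume for contradiction that case~(i) holds while $Q_0(x^{(i)})>0$. Since $Q_k\to Q_0$ uniformly near $x^{(i)}$, pick $r$ small and $k$ large enough that $Q_k\geq c>0$ on $B_r(x^{(i)})$; the defining property of $S_1$ then forces the measures $Q_k e^{2ma_ku_k}\,dx$, after extraction, to converge weakly on $B_r(x^{(i)})$ to $Q_0 e^{2m u_0}\,dx+\mu\,\delta_{x^{(i)}}$ with $\mu\geq\Lambda_1/2$. Testing the equation against smooth compactly supported functions and using $C^{2m-1,\alpha}$ convergence of $u_k$ away from $x^{(i)}$, $u_0$ solves $(-\Delta)^m u_0=Q_0 e^{2m u_0}+\mu\,\delta_{x^{(i)}}$ distributionally on $B_r(x^{(i)})$. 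Hence $u_0(x)=\mu\,\Gamma(x-x^{(i)})+H(x)$ with $H$ smooth, so $u_0(x)\geq \tfrac{2\mu}{\Lambda_1}\log\tfrac{1}{|x-x^{(i)}|}-C$ near $x^{(i)}$, and therefore $e^{2m u_0(x)}\gtrsim|x-x^{(i)}|^{-4m\mu/\Lambda_1}$. Since $4m\mu/\Lambda_1\geq 2m$, this fails to be integrable in $\R{2m}$, contradicting $\int_\Omega e^{2m u_0}\,dx\leq C$ (obtained from \eqref{vk} via Fatou). I expect the $\varepsilon$-regularity step to be the main technical obstacle: the absence of a maximum principle for $(-\Delta)^m$ makes the slicing on the intermediate boundary energies unavoidable, and it is already the most delicate point in the case $m=2$ treated in \cite{ARS}.
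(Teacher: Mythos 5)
Your overall skeleton (Brezis--Merle estimate for the Navier part $v_k$, a polyharmonic remainder $h_k$, a dichotomy, and a contradiction argument for the last claim) is the right one, but the core of your $\varepsilon$-regularity step is false. You claim that the mean-value property plus an annulus-slicing argument gives local $L^\infty$ bounds on $h_k^+$, hence that $u_k^+$ is locally bounded on $\Omega\setminus S_1$. For $m>1$ there is no one-sided mean value inequality: Pizzetti's formula $\Intm_{B_R}h_k\,dx=\sum_{i=0}^{m-1}c_iR^{2i}\Delta^i h_k(x_0)$ contains intermediate terms of uncontrolled sign, and the quantities $\int|\Delta^j u_k|$ you propose to control by slicing are not controlled by anything in the hypotheses (such control is available only on closed manifolds, via the Green function as in Lemma \ref{nabla}; its failure on domains is exactly what produces the set $S_0$). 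A concrete counterexample to your claim: $h_k(x)=k\big(c_k-|x|^2\big)$ with $c_k=k^{-2/(n+2)}$ is polyharmonic, $\|h_k^+\|_{L^1}$ is bounded, yet $h_k(0)=kc_k\to+\infty$. Correspondingly, solutions of \eqref{eqliou} can blow up to $+\infty$ on a set $S_0\subset\Omega\setminus S_1$ of dimension up to $2m-1$ (this is the Adimurthi--Robert--Struwe phenomenon the theorem is designed to capture), so "$u_k^+$ locally bounded on $\Omega\setminus S_1$" cannot be proved; if it were true, case (ii) would need no set $S_0$ at all. This flaw propagates: your case (i) needs the (false) upper bound, and your normalisation $\beta_k=-\inf_{K_k}u_k$ with $u_k/\beta_k\geq -1$, $u_k/\beta_k\leq C/\beta_k$ again uses it. The correct dichotomy is run on $\|h_k\|_{L^1(B_{R/2})}$: if bounded, Pizzetti plus interior estimates for polyharmonic functions give full $C^\ell_{\loc}$ bounds on $h_k$ and hence case (i) locally; if $\beta_k:=\|h_k\|_{L^1}\to\infty$, then $h_k/\beta_k\to\varphi\leq0$ polyharmonic with $\|\varphi\|_{L^1}=1$, and $u_k\to-\infty$ only off the zero set $S_0$ of $\varphi$, with no information (and possibly blow-up to $+\infty$) on $S_0$.

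There is also a gap in your proof of the final claim. To write $(-\Delta)^m u_0=Q_0e^{2mu_0}+\mu\,\delta_{x^{(i)}}$ you must pass to the limit in the distributional equation across $x^{(i)}$, which requires an $L^1_{\loc}$ bound on $u_k$ in a full neighbourhood of the concentration point; convergence in $C^{2m-1,\alpha}_{\loc}$ away from the point does not give this. Moreover, writing $u_0=\mu\Gamma+H$ with $H$ \emph{smooth} is unjustified: $H$ only solves a polyharmonic equation with $L^1$ right-hand side and need not be bounded below, so the lower bound $u_0\geq\frac{2\mu}{\Lambda_1}\log\frac1{|x-x^{(i)}|}-C$ does not follow. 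The paper avoids both problems by never writing an equation for the limit of $u_k$: it introduces the Navier solution $z_k$ of $(-\Delta)^mz_k=Q_ke^{2ma_ku_k}$ on $B_R(x_0)$, uses the boundary bounds $(-\Delta)^ju_k\geq-C$ on $\partial B_R$ (available since $\partial B_R$ avoids $S_1$) and the maximum principle (Proposition \ref{propmax}) to get $u_k\geq z_k-C$, then passes to the limit in $z_k$ (Lemma \ref{propmu}) and compares $z$ with $\log\frac1{|x-x_0|}$ using $\mu\geq\gamma_m\delta_{x_0}$, which already makes $e^{2mz}$ non-integrable and yields the contradiction via Fatou. You should adopt this comparison route, or else supply the missing $L^1_{\loc}$ bound on $u_k$ near $x^{(i)}$ and a correct description of the singular part of $u_0$.
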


We recently proved (see \cite{mar2}) the existence of solutions to the equation $(-\Delta)^m u=Q e^{2mu}$ on $\R{2m}$ with $ Q<0$ constant and $e^{2mu}\in L^1(\R{2m})$, for $m>1$. Scaling any such solution we find a sequence of solutions $u_k(x):=u(kx)+\log k$ concentrating at a point of negative $Q$-curvature. For $m=1$ that is not possible.

\medskip

On a closed manifold things are different in several respects. Under the assumption (which we always make) that $\ker P^{2m}_g$ contains only constant functions, quantization of the total $Q$-curvature in the sense of Li-Shafrir (see \eqref{intQg} below) holds, as proved in dimension $4$ by Druet and Robert \cite{DR} and Malchiodi \cite{mal}, and in arbitrary dimension by Ndiaye \cite{ndi}. Moreover the concentration set is finite. In \cite{DR}, however, it is assumed that the $Q$-curvatures are positive, while in \cite{mal} and \cite{ndi}, a slightly different equation is studied ($P^{2m}_gu_k+ Q_k=h_ke^{2mu_k}$, with $h_k$ constant and $Q_k$ prescribed), for which the negative case is simpler. With the help of results from our recent work \cite{mar2} and a technique of Robert and Struwe \cite{RS}, we can allow the prescribed $Q$-curvatures to have varying signs and, contrary to the case of an open domain in $\R{2m}$, we can rule out concentration at points of negative $Q$-curvature.

\begin{trm}\label{trmquant} Let $(M,g)$ be a $2m$-dimensional closed Riemannian manifold, such that $\ker P_g=\{constants\}$, and let $(u_k)$ be a sequence of solutions to \eqref{presc}, \eqref{volume} 
where the $Q_k$'s and $Q_0$ are given $C^1$ functions and $Q_k\to Q_0$ in $C^1(M)$. Let $\Lambda_1$ be as in \eqref{lambda1}. Then one of the following is true.
\begin{enumerate}
\item[(i)] For every $0\leq \alpha<1$, a subsequence converges in $C^{2m-1,\alpha}(M)$. 
\item[(ii)] There exists a finite (possibly empty) set $S_1=\{x^{(i)}:1\leq i\leq I\}$ such that $Q_0(x^{(i)})>0$ for $1\leq i\leq I$ and, up to taking a subsequence, $u_k\to -\infty$ locally uniformly on $(M\bs S_1)$. Moreover
\begin{equation}\label{QgL}
Q_k e^{2mu_k}\dvol_g \rightharpoonup\sum_{i=1}^I \Lambda_1\delta_{x^{(i)}}
\end{equation}
in the sense of measures; then \eqref{total} gives
\begin{equation}\label{intQg}
\int_M Q_g \mathrm{dvol}_g = I\Lambda_1.
\end{equation}
Finally, $S_1=\emptyset$ if and only if $\mathrm{vol}(g_k)\to 0$. 
\end{enumerate}
\end{trm}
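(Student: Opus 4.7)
The plan is to reduce the problem on $(M,g)$ to the local concentration-compactness of Theorem \ref{main}, applied in coordinate charts, and to complete the picture at each concentration point with a rescaling/bubble analysis. First, I would exploit that $P^{2m}_g$ is self-adjoint with kernel $\{\textrm{constants}\}$: integrating \eqref{presc} gives
$$\int_M Q_k e^{2mu_k}\dvol_g = \int_M Q^{2m}_g\dvol_g \quad\textrm{for every }k,$$
so the total $Q$-mass of the sequence is fixed. Together with \eqref{volume} and $Q_k\to Q_0$ in $C^1(M)$, this yields a uniform $L^1$ bound on $Q_k e^{2mu_k}$. In analogy with Theorem \ref{main}, define
$$S_1 := \Bigl\{x\in M : \lim_{r\to 0^+}\limsup_{k\to\infty}\int_{B_r(x)}|Q_k|e^{2mu_k}\dvol_g \geq \tfrac{\Lambda_1}{2}\Bigr\},$$
which is finite by the $L^1$ bound, with at most $2\Lambda_1^{-1}\sup_k\|Q_k e^{2mu_k}\|_{L^1}$ points.

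Next, working in normal coordinates around each $x_0\in M$, the operator $P^{2m}_g$ differs from $(-\Delta)^m$ by terms of order at most $2m-1$, and \eqref{presc} can be recast in the form \eqref{eqliou} up to smooth inhomogeneous contributions from $Q^{2m}_g$ and these lower-order terms. Theorem \ref{main} applied on small balls disjoint from $S_1$ produces three local behaviours: $(u_k)$ bounded, $u_k\to-\infty$, or a nontrivial normalised limit $u_k/\beta_k\to\varphi$ with $\varphi\leq 0$ polyharmonic. The third possibility can be ruled out globally: patching across charts yields a nontrivial polyharmonic $\varphi\leq 0$ on $M\setminus S_1$, and by a removable-singularity argument at the finite set $S_1$ combined with $\ker P^{2m}_g=\{\textrm{constants}\}$, $\varphi$ must be a nonpositive constant, which reduces to the case $u_k\to-\infty$ uniformly. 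Connectedness of $M\setminus S_1$ then glues the two remaining behaviours into a global dichotomy: either $(u_k)$ is bounded in $L^\infty_{\loc}(M\setminus S_1)$, or $u_k\to-\infty$ locally uniformly on $M\setminus S_1$. If $S_1 = \emptyset$ and $(u_k)$ is locally bounded, standard elliptic regularity applied to \eqref{presc} yields a $C^{2m-1,\alpha}(M)$-convergent subsequence, i.e.\ case (i); otherwise case (ii) is in force, after discarding the incompatible configuration ``$u_k$ locally bounded on $M\setminus S_1$ with $S_1\neq\emptyset$'', which via the conservation identity and \eqref{volume} leads to a contradiction.

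It remains to quantify the mass at each $x^{(i)}\in S_1$. A standard bubble rescaling is to pick $x^{(i)}_k\to x^{(i)}$ with $u_k(x^{(i)}_k)=\max_{B_\rho(x^{(i)})}u_k$, set $r^{(i)}_k := e^{-u_k(x^{(i)}_k)}$, and consider $\eta_k(y) := u_k(x^{(i)}_k + r^{(i)}_k y)+\log r^{(i)}_k$ read in normal coordinates; the $\eta_k$'s satisfy an equation that converges in $C^{2m-1,\alpha}_{\loc}$ to $(-\Delta)^m\eta_\infty = Q_0(x^{(i)})e^{2m\eta_\infty}$ on $\R{2m}$ with $\int_{\R{2m}} e^{2m\eta_\infty}dy<\infty$. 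The hard part -- and the main obstacle -- is to show $Q_0(x^{(i)})>0$ and that the bubble carries mass exactly $\Lambda_1$: the non-round entire solutions of Chang--Chen \cite{CC} in dimension four and the negative-$Q$ entire solutions of \cite{mar2} show that a purely $C^0$ argument cannot succeed. I would employ a Pohozaev-type identity combined with the $C^1$ convergence of $Q_k$, in the spirit of Robert--Struwe \cite{RS}, to rule out both non-round bubbles and negative-$Q$ bubbles, forcing $\eta_\infty$ to be a standard Liouville bubble with $\int Q_0(x^{(i)})e^{2m\eta_\infty}dy = \Lambda_1$ and $Q_0(x^{(i)})>0$. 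The quantization \eqref{QgL} follows, \eqref{intQg} is then immediate from \eqref{total}, and the equivalence $S_1=\emptyset\iff\vol(g_k)\to 0$ is direct: if $S_1=\emptyset$ and we are in case (ii) then $u_k\to-\infty$ uniformly on $M$ and $\vol(g_k)\to 0$; conversely $\vol(g_k)\to 0$ forces $\|Q_k e^{2mu_k}\|_{L^1}\to 0$ and thus $S_1=\emptyset$.
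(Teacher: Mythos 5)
There is a genuine gap, and it sits exactly at the two hardest points of the theorem. First, your argument for $Q_0(x^{(i)})>0$ and for the shape of the bubble has no working mechanism: a Pohozaev-type identity on the rescaled limit cannot by itself exclude the degenerate limits, because entire solutions of $(-\Delta)^m z_0=Q_0(x_0)e^{2mz_0}$ with $Q_0(x_0)\leq 0$, and non-spherical solutions with $Q_0(x_0)>0$, genuinely exist (as you yourself note). What rules them out in the paper is a global ingredient you never introduce: the Green's function of $P^{2m}_g$ on the closed manifold (here is where $\ker P_g=\{constants\}$ and compactness enter), which yields the uniform estimates $\int_{B_r}|\nabla^\ell u_k|^p\dvol_g\leq C r^{2m-\ell p}$ (Lemma \ref{nabla}). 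Combined with the classification results of \cite{mar1}, \cite{mar2} --- which say that any non-spherical or nonpositive-$Q$ entire solution has $\Delta^j z_0$ approaching a nonzero constant (at least on a cone) for some $1\leq j\leq m-1$ --- these estimates produce the contradiction $\sigma_k^{2j-2m}\int_{B_{R\sigma_k}(x_k)}|\nabla^{2j}u_k|\dvol_g\gtrsim R^{2m}$ versus the bound $C R^{2m-2j}$. The same lemma is what disposes of your ``polyharmonic profile'' scenario (your removable-singularity claim at $S_1$ for a merely locally $C^{2m-1,\alpha}$ limit with a one-sided bound is itself unproven, and unnecessary once Lemma \ref{nabla} is available, since $\|\nabla u_k\|_{L^1(B_r)}\leq Cr^{2m-1}$ forces $\nabla\varphi\equiv 0$).

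Second, and more seriously, the quantization \eqref{QgL} with coefficient exactly $\Lambda_1$ does not follow from identifying each bubble as the standard one $\eta_0$: the bubble only accounts for the mass in $B_{R\mu_{i,k}}(x_{i,k})$, and a priori several bubbles could accumulate at the same point $x^{(i)}$ and/or mass could survive in the necks, giving $\sum_i L_i\Lambda_1\delta_{x^{(i)}}$ with $L_i>1$ --- this actually happens on $\R{2}$ by Chen's example, so no purely local argument can close this step. The paper needs all of Step 4: the exhaustion of concentration points with property $(A_5)$, the pointwise decay $\dist(x,x_{i,k})^{2m\nu}e^{2mu_k(x)}\leq C_\nu\mu_{i,k}^{2m(\nu-1)}$ giving vanishing neck energy, and the Pohozaev-type balancing argument of Step 4c (this is where $Q_k\to Q_0$ in $C^1$ is used) showing that the mutual distances $\tilde R_{i,k}$ stay bounded away from zero, i.e.\ that the $x^{(i)}$ are distinct and each carries exactly one bubble. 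Your proposal compresses all of this into ``the quantization \eqref{QgL} follows,'' which is precisely the content that must be proved.
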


An immediate consequence of Theorem \ref{trmquant} (Identity \eqref{intQg} in particular) and the Gauss-Bonnet-Chern formulas \eqref{GBC} and \eqref{GBC4}, is the following compactness result:

\begin{cor}\label{compa} Under the hypothesis of Theorem \ref{trmquant} assume that either
\begin{enumerate}
\item $\chi(M)\leq 0$ and $\dim M\in\{2,4\}$, or
\item $\chi(M)\leq 0$, $\dim M\geq 6$ and $(M,g)$ is locally conformally flat,
\end{enumerate}
and that $\mathrm{vol}(g_k)\not\to 0$. Then (i) in Theorem \ref{trmquant} occurs.
\end{cor}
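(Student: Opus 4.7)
The plan is to argue by contradiction, relying on the dichotomy in Theorem \ref{trmquant} and comparing \eqref{intQg} with the Gauss-Bonnet-Chern identity available under each hypothesis. Suppose case (i) of Theorem \ref{trmquant} fails. Then after passing to a subsequence we are in case (ii), so \eqref{QgL} and \eqref{intQg} hold for a (possibly empty) finite set $S_1=\{x^{(i)}:1\leq i\leq I\}$ with $Q_0(x^{(i)})>0$. The extra assumption $\mathrm{vol}(g_k)\not\to 0$ together with the last sentence of (ii) forces $S_1\neq\emptyset$, i.e.\ $I\geq 1$, whence
\[
\int_M Q_g^{2m}\dvol_g \;=\; I\Lambda_1 \;\geq\; \Lambda_1 \;>\; 0.
\]

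Next I would read off the opposite inequality from topology. In case (2) the manifold is closed and locally conformally flat, so \eqref{GBC} directly gives $\int_M Q_g^{2m}\dvol_g=\tfrac{\Lambda_1}{2}\chi(M)\leq 0$. In case (1) with $\dim M=2$ the same formula applies since every surface is locally conformally flat (and indeed reduces to the classical Gauss-Bonnet $\int_M K_g\dvol_g=2\pi\chi(M)$ with $\Lambda_1=4\pi$). In case (1) with $\dim M=4$ the manifold need not be locally conformally flat, but \eqref{GBC4} yields
\[
\int_M Q_g^4\dvol_g \;=\; 8\pi^2\chi(M)-\tfrac{1}{4}\int_M|W_g|^2\dvol_g \;\leq\; 8\pi^2\chi(M)\;\leq\; 0,
\]
since the Weyl term is non-negative. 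In all three sub-cases the hypothesis $\chi(M)\leq 0$ delivers $\int_M Q_g^{2m}\dvol_g\leq 0$, contradicting the strictly positive value $I\Lambda_1$ obtained from \eqref{intQg}.

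Hence the assumption that (i) fails is untenable, and a subsequence converges in $C^{2m-1,\alpha}(M)$. There is essentially no obstacle here: the work has been done in Theorem \ref{trmquant}, and the corollary is a direct arithmetic consequence, with the only subtlety being that in dimension $4$ without the locally conformally flat assumption one must invoke the stronger formula \eqref{GBC4} and use the sign of $|W_g|^2$ to absorb the Weyl contribution.
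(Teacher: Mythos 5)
Your proposal is correct and follows exactly the route the paper intends: the paper presents the corollary as an immediate consequence of identity \eqref{intQg} in Theorem \ref{trmquant} (with $\mathrm{vol}(g_k)\not\to 0$ forcing $I\geq 1$ via the last clause of case (ii)) combined with the Gauss-Bonnet-Chern formulas \eqref{GBC} and \eqref{GBC4}, which under $\chi(M)\leq 0$ give $\int_M Q_g\dvol_g\leq 0$ and hence the contradiction. Your handling of the three sub-cases, including absorbing the Weyl term in dimension $4$ by its sign, is precisely the intended argument.
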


It is not clear whether the hypothesis that $(M,g)$ be locally conformally flat when $\dim M\geq 6$ is necessary in Corollary \ref{compa}. For instance, we could drop it if we knew that $W\geq 0$ in \eqref{GBC5}, in analogy with \eqref{GBC4}.

\medskip

Contrary to what happens for the Yamabe equation (see \cite{dru1}, \cite{dru2}, \cite{DH} and \cite{DHR}), the concentration points of $S$ in Theorem \ref{trmquant} are isolated, as already proved in \cite{DR} in dimension 4. In fact, a priori one could expect to have
\begin{equation}\label{QgL2}
Q_k e^{2mu_k}\dvol_g \rightharpoonup\sum_{i=1}^I L_i\Lambda_1\delta_{x^{(i)}},\quad \text{for some }L_i\in \mathbb{N}\backslash \{0\},
\end{equation}
instead of \eqref{QgL}. The compactness of $M$ is again a crucial ingredient here; indeed X. Chen \cite{ch} showed that on $\R{2}$ (where quantization holds, as already discussed) one can have \eqref{QgL2} with $L_i>1$.

\medskip

Theorems \ref{main} and \ref{trmquant} will be proven in Sections \ref{sectionmain} and \ref{sectionclosed} respectively.
In Section \ref{sectionsphere} we also consider the special case when $M=S^{2m}$.
In the proofs of the above theorems we use techniques and ideas from several of the cited papers, particularly from \cite{ARS}, \cite{BM}, \cite{DR}, \cite{mal}, \cite{MS} and \cite{RS}.
In the following, the letter $C$ denotes a generic positive constant, which may change from line to line and even within the same line.

\medskip

I'm grateful to Prof. Michael Struwe for many stimulating discussions.

\section{The case of an open domain in $\R{2m}$}\label{sectionmain}

An important tool in the proof of Theorem \ref{main} is the following estimate, proved by Br\'ezis and Merle \cite{BM} in dimension $2$. For the proof in arbitrary dimension see \cite{mar1}. Notice the role played by the constant $\gamma_m:=\frac{\Lambda_1}{2}$, which satisfies
\begin{equation}\label{fund}
(-\Delta)^m\Big(-\frac{1}{\gamma_m}\log|x|\Big)=\delta_0\quad \textrm{in }\R{2m}.
\end{equation}

\begin{trm}\label{a2m} Let $f\in L^1(B_R(x_0))$, $B_R(x_0)\subset\R{2m}$, and let $v$ solve
$$\left\{
\begin{array}{ll}
(-\Delta)^m v=f &\textrm{in } B_R(x_0),\\
v=\Delta v=\ldots=\Delta^{m-1}v=0& \textrm{on }\partial B_R(x_0).
\end{array}
\right.
$$
Then, for any $p\in\Big(0,\frac{\gamma_m}{\|f\|_{L^1(B_R(x_0))}}\Big)$, we have $e^{2m p|v|}\in L^1(B_R(x_0))$ and
$$\int_{B_R(x_0)}e^{2m p |v|}dx\leq C(p)R^{2m}.$$
\end{trm}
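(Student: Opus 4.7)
The plan is to follow the classical Brézis--Merle argument, but in its polyharmonic version: represent $v$ by a Green's function, control the logarithmic singularity by the fundamental solution, and then apply Jensen's inequality to convert a pointwise $L^1$ bound into an exponential integrability statement.

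First I would introduce the Green's function $G_R(x,y)$ of $(-\Delta)^m$ on $B_R(x_0)$ with Navier boundary conditions, and write
\[
v(x)=\int_{B_R(x_0)}G_R(x,y)f(y)\,dy.
\]
By \eqref{fund} the fundamental solution of $(-\Delta)^m$ on $\R{2m}$ is $-\frac{1}{\gamma_m}\log|x|$, so $G_R(x,y)=-\frac{1}{\gamma_m}\log|x-y|+H_R(x,y)$ with $H_R$ polyharmonic in $x$ and absorbing the boundary correction. A scaling/reflection argument on the ball (or direct inspection of the Boggio--type representation) gives the sharp estimate
\[
|G_R(x,y)|\le \frac{1}{\gamma_m}\log\frac{cR}{|x-y|}\qquad\text{for all }x,y\in B_R(x_0),
\]
for some universal constant $c=c(m)$. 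Establishing this bound with the correct constant $\frac{1}{\gamma_m}$ in front of the logarithm is the main technical obstacle; it is precisely the point at which the constant $\gamma_m$ in the hypothesis $p<\gamma_m/\|f\|_{L^1}$ enters, and it is the step the author defers to \cite{BM} and \cite{mar1}.

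With this estimate, the remainder of the proof is soft. Assume $f\not\equiv 0$ and set $d\mu(y):=\|f\|_{L^1}^{-1}|f(y)|\,dy$, a probability measure on $B_R(x_0)$. Then
\[
2mp|v(x)|\le 2mp\,\|f\|_{L^1}\int_{B_R(x_0)}|G_R(x,y)|\,d\mu(y),
\]
and Jensen's inequality applied to the convex function $t\mapsto e^t$ yields
\[
e^{2mp|v(x)|}\le \int_{B_R(x_0)}e^{2mp\|f\|_{L^1}\,|G_R(x,y)|}\,d\mu(y).
\]

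Finally I would integrate over $x\in B_R(x_0)$ and apply Fubini together with the Green's function estimate, obtaining
\[
\int_{B_R(x_0)}e^{2mp|v|}\,dx
\le \int_{B_R(x_0)}\Bigl(\int_{B_R(x_0)}\Bigl(\tfrac{cR}{|x-y|}\Bigr)^{\!2mp\|f\|_{L^1}/\gamma_m}dx\Bigr)d\mu(y).
\]
Since $p<\gamma_m/\|f\|_{L^1}$, the exponent $\alpha:=2mp\|f\|_{L^1}/\gamma_m$ satisfies $\alpha<2m$, so $|x-y|^{-\alpha}$ is integrable on $B_R(x_0)$ uniformly in $y\in B_R(x_0)$, with the inner integral bounded by $C(p)R^{2m}$ by a standard computation. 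The case $f\equiv 0$ is trivial, which concludes the proof.
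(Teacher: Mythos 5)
The paper itself contains no proof of Theorem \ref{a2m}: it is quoted with a pointer to \cite{BM} for $m=1$ and to \cite{mar1} for general $m$, so there is no internal argument to compare yours with. Your skeleton is the classical Br\'ezis--Merle scheme (Green representation, sharp logarithmic kernel bound, Jensen against the probability measure $\|f\|_{L^1}^{-1}|f|\,dy$, Fubini), and the second half of your argument is correct: once $|G_R(x,y)|\le\frac{1}{\gamma_m}\log\frac{cR}{|x-y|}$ is granted, Jensen and Fubini reduce everything to the integrability of $|x-y|^{-\alpha}$ with $\alpha=2mp\|f\|_{L^1}/\gamma_m<2m$, which gives the bound $C\,R^{2m}$ (with $C$ depending on $\alpha$, i.e.\ on $p$ and $\|f\|_{L^1}$, consistent with how the theorem is used later with $p\in(1,\gamma_m/\alpha)$ fixed).

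The gap is that the one genuinely nontrivial ingredient --- the pointwise bound on the Navier Green function of $(-\Delta)^m$ on the ball with the \emph{sharp} constant $1/\gamma_m$ --- is asserted rather than proved, and the justifications you gesture at are not the right ones: Boggio's formula concerns the Dirichlet problem ($v=\partial v/\partial n=\dots=0$), not the Navier problem solved here, and for $m\ge 2$ there is no maximum principle for $(-\Delta)^m$ that would let you run the two-dimensional argument (harmonic corrector bounded on $\partial B_R$) verbatim; a bare ``scaling/reflection argument'' does not by itself produce the constant $1/\gamma_m$, which is the entire content of the theorem. A workable route exploits the Navier structure: the kernel is the $m$-fold composition of the Dirichlet Green function $G_1$ of $-\Delta$ on $B_R\subset\R{2m}$; each $G_1$ is nonnegative and dominated by the fundamental solution $c_m|x-y|^{2-2m}$ by the second-order maximum principle (so in particular $G_R\ge 0$ and the absolute value is harmless), and a Riesz-composition computation on the ball shows that the $m$-fold composition of $c_m|x-y|^{2-2m}$ over $B_R$ is bounded by $\frac{1}{\gamma_m}\log\frac{cR}{|x-y|}$, the coefficient being forced by \eqref{fund} since the full-space composition of fundamental solutions of $-\Delta$ is the fundamental solution of $(-\Delta)^m$. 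With such a lemma supplied (this is the substance of the proof in the cited references), your argument is complete; without it, the proof establishes only the soft part of the statement.
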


\begin{lemma}\label{propmu} Let $f\in L^1(\Omega)\cap L^p_{\loc}(\Omega\bs S_1)$ for some $p>1$, where $\Omega\subset\R{2m}$ and $S_1\subset \Omega$ is a finite set. Assume that
$$\left\{
\begin{array}{ll}
(-\Delta)^m u=f & \textrm{in }\Omega\\
\Delta^j u=0 &\textrm{on }\partial\Omega \textrm{ for } 0\leq j\leq m-1.
\end{array}
\right.$$
Then $u$ is bounded in $W^{2m,p}_{\loc}(\Omega\backslash S_1)$; more precisely, for any $\overline{B_{4R}(x_0)}\subset (\Omega\bs S_1)$, there is a constant $C$ independent of $f$ such that
\begin{equation}\label{21*}
\|u\|_{W^{2m,p}(B_R(x_0))}\leq C (\|f\|_{L^p(B_{4R}(x_0))}+\|f\|_{L^1(\Omega)}).
\end{equation}
\end{lemma}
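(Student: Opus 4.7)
The plan is to decompose $u=v+w$ on the reference ball, apply Calder\'on--Zygmund $L^p$-theory to $v$ (which inherits the $L^p$ regularity of $f$ away from $S_1$), use an interior Liouville-type estimate for the polyharmonic remainder $w$, and close the argument by controlling $\|u\|_{L^1(B_{2R}(x_0))}$ through the Green's function of $\Omega$.

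Concretely, fix $\overline{B_{4R}(x_0)}\subset\Omega\bs S_1$ and let $G_R$ denote the Green's function of $(-\Delta)^m$ on $B_{4R}(x_0)$ with Navier boundary data $\Delta^j=0$, $0\leq j\leq m-1$. Setting
$$v(x):=\int_{B_{4R}(x_0)}G_R(x,y)f(y)\,dy,\qquad w:=u-v,$$
the hypothesis $f\in L^p_{\loc}(\Omega\bs S_1)$ together with $\overline{B_{4R}(x_0)}\cap S_1=\emptyset$ gives $f\in L^p(B_{4R}(x_0))$, so classical $L^p$-regularity for the polyharmonic operator on a smooth ball yields
$$\|v\|_{W^{2m,p}(B_{4R}(x_0))}\leq C\|f\|_{L^p(B_{4R}(x_0))}.$$
The remainder then satisfies $(-\Delta)^m w=0$ on $B_{4R}(x_0)$, hence is real-analytic, and iterating the mean-value identities for polyharmonic functions gives the interior bound
$$\|w\|_{W^{2m,p}(B_R(x_0))}\leq C(R)\|w\|_{L^1(B_{2R}(x_0))}\leq C(R)\bigl(\|u\|_{L^1(B_{2R}(x_0))}+\|v\|_{L^1(B_{2R}(x_0))}\bigr).$$

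To finish, I would estimate both $L^1$ norms by Fubini, using the local integrability of the fundamental solution $-\gamma_m^{-1}\log|x-y|$ (cf.\ \eqref{fund}). Since $G_R$ differs from this fundamental solution by a smooth correction, one gets $\|v\|_{L^1(B_{4R}(x_0))}\leq C\|f\|_{L^1(B_{4R}(x_0))}$. The analogous representation $u(x)=\int_\Omega G_\Omega(x,y)f(y)\,dy$ via the Green's function $G_\Omega$ of $(-\Delta)^m$ on $\Omega$ with the prescribed Navier conditions then yields
$$\|u\|_{L^1(B_{2R}(x_0))}\leq\Bigl(\sup_{y\in\Omega}\int_{B_{2R}(x_0)}|G_\Omega(x,y)|\,dx\Bigr)\|f\|_{L^1(\Omega)}\leq C\|f\|_{L^1(\Omega)}.$$
Summing the estimates for $v$ and $w$ gives \eqref{21*}.

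The main obstacle is this last step: the uniform bound on $\int_{B_{2R}(x_0)}|G_\Omega(x,y)|\,dx$ as $y$ varies over all of $\Omega$. On a regular bounded domain this is classical via Boggio-type representations (log singularity plus a smooth remainder), but for a general open $\Omega$ one may first restrict to a smooth bounded subdomain containing $B_{4R}(x_0)$, which suffices since \eqref{21*} is a purely local statement.
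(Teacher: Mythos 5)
Your local decomposition on $B_{4R}(x_0)$ is exactly the paper's: the Green (Navier) potential of $f$ on the ball handled by $L^p$-theory, plus a polyharmonic remainder controlled in $W^{2m,p}(B_R)$ by its $L^1$-norm via interior estimates (Proposition \ref{c2m}), with $\|v\|_{L^1}\leq C\|f\|_{L^1}$ as you say. The gap is in the step you yourself flag: the bound $\|u\|_{L^1(B_{2R}(x_0))}\leq C\|f\|_{L^1(\Omega)}$. Your proposed fix --- replace $\Omega$ by a smooth bounded subdomain $\Omega'\supset B_{4R}(x_0)$ and use the Green function of $\Omega'$ --- does not work: the Navier data $\Delta^j u=0$ are prescribed on $\partial\Omega$, not on $\partial\Omega'$, so on $\Omega'$ the function $u$ is the Green potential of $f|_{\Omega'}$ \emph{plus} a polyharmonic function whose size is a priori uncontrolled, and that polyharmonic part is precisely the quantity at stake. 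Relatedly, \eqref{21*} is not a purely local statement; the term $\|f\|_{L^1(\Omega)}$ on the right is there exactly because the global boundary condition on $\partial\Omega$ is what pins down $u$ in $L^1$ near $x_0$.

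The paper closes this step differently and more elementarily: writing the Navier problem as a chain of second-order Dirichlet problems, Lemma \ref{nn-1} (an $L^1$-to-$W^{1,r}$ duality estimate for $-\Delta$ with zero boundary data, $1\leq r<\frac{2m}{2m-1}$) applied to $\Delta^{m-1}u$, followed by $L^p$-theory for each intermediate Laplacian, gives $\|u\|_{W^{2m-1,r}(\Omega)}\leq C(r)\|f\|_{L^1(\Omega)}$ and hence, by Sobolev embedding in dimension $2m$, $\|u\|_{L^s(\Omega)}\leq C(s)\|f\|_{L^1(\Omega)}$ for every $s<\infty$; this supplies the $L^1$ bound on $u$ near $x_0$ that your interior estimate for $w$ needs. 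If you prefer your Green-function route, you must run it on $\Omega$ itself (which in the paper's application is a ball, where Navier Green-function bounds are indeed available as iterated compositions of the Dirichlet Green function of $-\Delta$), but the reduction to an arbitrary smooth subdomain has to be abandoned.
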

The proof of Lemma \ref{propmu} is given in the appendix.

\medskip

\noindent\emph{Proof of Theorem \ref{main}.} We closely follow \cite{ARS}. Let $S_1$ be defined as in the statement of the Theorem. Clearly \eqref{vk} implies that $S_1=\{x^{(i)}\in\Omega :1\leq i\leq I\}$ is finite.
Given $x_0\in\Omega\backslash S_1$, we have, for some $0<R<\dist(x_0,\partial\Omega),$
\begin{equation}\label{limsup}
\alpha:=\limsup_{k\rightarrow \infty}\int_{B_R(x_0)}|Q_k|e^{2m a_k u_k}dx< \gamma_m.
\end{equation}
For such $x_0$ and $R$ write $u_k=v_k+h_k$ in $B_R(x_0)$, where

$$
\left\{
\begin{array}{ll}
(-\Delta)^mv_k=Q_ke^{2m a_k u_k}&\textrm{in } B_R(x_0)\\
v_k=\Delta v_k=\ldots=\Delta^{m-1}v_k=0&\textrm{on }\partial B_R(x_0)
\end{array}
\right.
$$
and $(-\Delta)^mh_k=0$. 
Set $h_k^+:=\chi_{\{h_k\geq 0\}}h_k$, $h_k^-:=h_k-h_k^+$. Since $h_k^+\leq u_k^+ +|v_k|$, we have
$$\|h_k^+\|_{L^1(B_R(x_0))}\leq \|u_k^+\|_{L^1(B_R(x_0))}+\|v_k\|_{L^1(B_R(x_0))}.$$
Observe that, for $k$ large enough $mu_k^+\leq 2ma_ku_k^+\leq e^{2m a_k u_k}$ on $B_R(x_0)$, hence \eqref{vk} implies
$$\int_{B_R(x_0)}u_k^+dx\leq C\int_{B_R(x_0)}e^{2m a_k u_k}dx\leq C.$$
As for $v_k$, observe that $1<\frac{\gamma_m}{\alpha}$, hence by Theorem \ref{a2m}
$$\int_{B_R(x_0)}2m|v_k|dx\leq\int_{B_R(x_0)}e^{2m|v_k|}dx\leq CR^{2m},$$
with $C$ depending on $\alpha$ and not on $k$. Hence
\begin{equation}\label{hk+}
\|h_k^+\|_{L^1(B_R(x_0))}\leq C.
\end{equation}

We distinguish 2 cases.

\medskip

\noindent\emph{Case 1.} Suppose that $\|h_k\|_{L^1(B_{R/2})(x_0)}\leq C$ uniformly in $k$. Then by Proposition \ref{c2m} we have that $h_k$ is equibounded in $C^\ell(B_{R/8}(x_0))$ for every $\ell\geq 0$. Moreover, by Pizzetti's formula (Identity \eqref{pizzetti} in the appendix) and \eqref{hk+},
\begin{eqnarray*}
\Intm_{B_R(x_0)}|h_k(x)|dx&=&2\Intm_{B_R(x_0)} h_k^+(x)dx-\Intm_{B_R(x_0)} h_k(x)dx\\
&\leq& C -\Intm_{B_R(x_0)} h_k(x)dx\\
&=&C-\sum_{i=0}^{m-1}c_iR^{2i}\Delta^ih_k(x_0) \leq C.
\end{eqnarray*}
Hence we can apply Proposition \ref{c2m} locally on all of $B_R(x_0)$ and obtain bounds for $(h_k)$ in $C^\ell_{\loc}(B_R(x_0))$ for any $\ell\geq 0$.

Fix $p\in (1,\gamma_m/\alpha)$. By Theorem \ref{a2m} $\|e^{2m|v_k|}\|_{L^p(B_R(x_0))}\leq C(p)$, hence, using that $a_k\to 1$ uniformly on $B_R(x_0)$, we infer

\begin{equation}\label{vk2}
\|(-\Delta)^mv_k\|_{L^p(B)}=\|(Q_ke^{2m a_k h_k})e^{2m a_k v_k}\|_{L^p(B)}\leq C(B,p)
\end{equation}
for every ball $B \subset\subset B_R(x_0)$ and for $k$ large enough. In addition $\|v_k\|_{L^1(B_R(x_0))}\leq C$, hence by elliptic estimates,
$$\|v_k\|_{W^{2m,p}(B)}\leq C(B,p)\quad \text{for every ball } B\subset\subset B_R(x_0).$$ By the immersion $W^{2m,p}\hookrightarrow C^{0,\alpha}$, $(v_k)$, is bounded in $C^{0,\alpha}_{\loc}(B_R(x_0))$, for some $\alpha>0$.
Going back to \eqref{vk2}, we now see that $\Delta^m v_k$ is uniformly bounded in $L^\infty_{\loc}(B_R(x_0))$, hence
$$\|v_k\|_{W^{2m,p}(B)}\leq C(B,p)$$
for every $p>1$, $B\subset\subset B_R(x_0)$, and by the immersion $W^{2m,p}\hookrightarrow C^{2m-1,\alpha}$ we obtain that $(v_k)$, hence $(u_k)$, is bounded in $C^{2m-1,\alpha}_{\loc}(B_R(x_0))$.

\medskip

\noindent\emph{Case 2.} Assume that $\|h_k\|_{L^1(B_{R/2}(x_0))}=:\beta_k\rightarrow \infty$ as $k\rightarrow \infty$. Set $\varphi_k:=\frac{h_k}{\beta_k}$, so that
\begin{enumerate}
\item $\Delta^m \varphi_k=0,$
\item  $\|\varphi_k\|_{L^1(B_{R/2}(x_0))}=1$,
\item $\|\varphi_k^+\|_{L^1(B_R(x_0))}\rightarrow 0$ by \eqref{hk+}.
\end{enumerate}
As above we have that $\varphi_k$ is bounded in $C^\ell_{\loc}(B_R(x_0))$ for every $\ell\geq 0$, hence a subsequence converges in $C^{2m}_{\loc}(B_R(x_0))$ to a function $\varphi$, with
\begin{enumerate}
\item $\Delta^m \varphi=0,$
\item  $\|\varphi\|_{L^1(B_{R/2}(x_0))}=1$,
\item $\|\varphi^+\|_{L^1(B_R(x_0))}=0$, hence $\varphi\leq 0$.
\end{enumerate}

Let us define $S_0=\{x\in B_R(x_0):\varphi(x)=0\}$. Take $x\in S_0$; then by \eqref{pizzetti}, $\Delta \varphi(x),\ldots,\Delta^{m-1}\varphi(x)$ cannot all vanish, unless $\varphi\equiv 0$ on $B_\rho(x)\subset B_R(x_0)$ for some $\rho>0$, but then by analyticity, we would have $\varphi\equiv 0,$ contradiction. Hence there exists $j$ with $1\leq j\leq 2m-3$ such that
$$D^j\varphi(x)=0,\quad D^{j+1}\varphi(x)\neq 0,$$
i.e.
$$S_0\subset \bigcup_{j=1}^{2m-3}\{x\in B_R(x_0):D^j\varphi(x)=0,D^{j+1}\varphi(x)\neq 0\}.$$
Therefore $S_0$ is $(2m-1)$-rectifiable.
Since $\varphi<0$ on $B_R(x_0)\backslash S_0$, we infer
$$h_k=\beta_k\varphi_k\rightarrow -\infty, \quad e^{2m a_k h_k}\rightarrow 0$$
locally uniformly on $B_R(x_0)\backslash S_0$. Then, as before, from
$$(-\Delta)^m v_k=(Q_ke^{2m a_k h_k})(e^{2m a_k v_k}),$$
we have that $v_k$ is bounded in $C^{2m-1,\alpha}_{\loc}(\Omega\backslash S_0)$. Then $u_k=h_k+v_k\rightarrow -\infty$ uniformly locally away from $S_0$.

\medskip

Since Case 1 and Case 2 are mutually exclusive, covering $\Omega\backslash S_1$ with balls, we obtain that either a subsequence $u_k$ is bounded in $C^{2m-1,\alpha}_{\loc}(\Omega\backslash S_1)$, or a subsequence $u_k\rightarrow -\infty$ locally uniformly on $\Omega\backslash (S_0\cup S_1)$. In this latter case, the behavior described in case (ii) of the theorem occurs. Indeed fix any $B_R(x_0)\subset\Omega\backslash S_1$ and take $\beta_k$ as above. Then, on a ball $B_\rho (y_0)\subset \Omega\backslash S_1$, we can wrie $u_k=\tilde v_k+\tilde h_k$ as above, where $\tilde h_k\to-\infty$ locally uniformly away from a rectifiable set $S_0$ of dimension at most $(2m-1)$, $\frac{\tilde h_k}{\tilde \beta_k}\to \tilde \varphi$ ,  where $\tilde \beta_k=\|\tilde h_k\|_{L^1(B_{\rho/2}(y))}$, and $\tilde v_k$ is bounded in $C^{2m-1,\alpha}_{\loc}(B_\rho(y_0))$. Then $\frac{\tilde v_k}{\beta_k}\to 0$ in $C^{2m-1,\alpha}_{\loc}(B_\rho(y_0))$, and we have that either
\begin{itemize}
\item[(a)] $\frac{\tilde h_k}{\beta_k}$ and $\frac{u_k}{\beta_k}$ are bounded in $C^{2m-1,\alpha}_{\loc}(B_\rho(y_0))$, or
\item[(b)] $\frac{\tilde h_k}{\beta_k}$ and $\frac{u_k}{\beta_k}$ go to $-\infty$ locally uniformly away from $S_0$.
\end{itemize}
Since the 2 cases are mutually exclusive, and on $B_R(x_0)$ case (a) occurs, upon covering $\Omega\backslash S_1$ with a sequence of balls,  we obtain the desired behavior for $\frac{u_k}{\beta_k}$.

\medskip

We now show that if $I\geq 1$ and $Q_0(x^{(i)})>0$ for some $1\leq i\leq I$, then Case 2 occurs. Assume by contradiction that $Q_0(x_0)>0$ for some $x_0\in S_1$ and Case 1 occurs, i.e. $(u_k)$ is bounded in $C^{2m-1,\alpha}_{\loc}(\Omega\bs S_1)$, so that $f_k:=Q_ke^{2m a_k u_k}$ is bounded in $L^\infty_{\loc}(\Omega\backslash S_1)$. Then there exists a finite signed measure $\mu$ on $\Omega$, with $\mu\in L^\infty_{\loc}(\Omega\bs S_1)$ such that
\begin{eqnarray*}
f_k &\rightharpoonup& \mu \quad \textrm{as measures}\\
f_k &\rightharpoonup & \mu \quad \textrm{in } L^p_{\loc}(\Omega\bs S_1)\textrm{ for } 1\leq  p < \infty.
\end{eqnarray*}
Let us take $R>0$ such that $\overline{B_R(x_0)}\subset \Omega$, $B_R(x_0)\cap S_1=\{x_0\}$ and $Q_0>0$ on $B_R(x_0)$. By our assumption, 
\begin{equation}\label{geqC}
(-\Delta)^ju_k\geq -C,\quad\textrm{on }\partial B_R(x_0) \textrm{ for }0\leq j\leq m-1.\end{equation}
Let $z_k$ be the solution to
$$
\left\{
\begin{array}{ll}
(-\Delta)^m z_k= Q_k e^{2m a_k u_k}&\textrm{in }B_R(x_0)\\
z_k=\Delta z_k=\ldots=\Delta^{m-1} z_k=0&\textrm{on }\partial B_R(x_0).
\end{array}
\right.
$$
By Proposition \ref{propmax}, and \eqref{geqC}
\begin{equation}\label{ukc}
u_k\geq z_k- C.
\end{equation}
By Lemma \ref{propmu}, up to a subsequence, $z_k\rightarrow z$ in $C^{2m-1,\alpha}_{\loc}(B_R(x_0)\bs\{x_0\})$, where
$$
\left\{
\begin{array}{ll}
(-\Delta)^m z= \mu &\textrm{in }B_R(x_0)\\
z=\Delta z=\ldots=\Delta^{m-1} z=0&\textrm{on }\partial B_R(x_0).
\end{array}
\right.
$$
Since $Q_0(x_0)>0$, we have $\mu\geq \gamma_m \delta_{x_0}=(-\Delta)^m\ln \frac{1}{|x-x_0|}$, and Proposition \ref{propmax} applied to the function $z(x)-\ln\frac{1}{|x-x_0|}$ implies
$$z(x)\geq \ln \frac{1}{|x-x_0|} -C,$$
hence
$$\int_{B_R(x_0)}e^{2m z}dx\geq \frac{1}{C}\int_{B_R(x_0)}\frac{1}{|x-x_0|^{2m}}dx=+\infty.$$
Then \eqref{ukc} and Fatou's lemma imply
\begin{eqnarray}
\liminf_{k\to\infty}\int_{B_R(x_0)} e^{2m a_k u_k}dx &\geq& \int_{B_R(x_0)}\liminf_{k\to\infty}e^{2m a_k u_k}dx\nonumber \\
&\geq& \frac{1}{C} \int_{B_R(x_0)}\liminf_{k\to\infty} e^{2m a_k z_k}dx\label{fatou}\\
& \geq& \frac{1}{C}\int_{B_R(x_0)}e^{2m z}dx=+\infty,\nonumber
\end{eqnarray}
contradicting \eqref{vk}.
\hfill$\square$

\medskip

The following proposition gives a general procedure to rescale at points where $u_k$ goes to infinity. 

\begin{prop}\label{propblowup}
In the hypothesis of Theorem \ref{main}, assume that $a_k\equiv 1$ for every $k$ and that case (ii) occurs. Then, for every $x_0\in S$ such that $\sup_{B_R(x_0)} u_k\rightarrow \infty$ for every $0<R<\dist(x_0,\partial \Omega)$ as $k\rightarrow \infty$, there exist points $x_k\rightarrow x_0$ and positive numbers $r_k\rightarrow 0$ such that
\begin{equation}\label{scal}
v_k(x):=u_k(x_k+r_k x)+\ln r_k\leq 0\leq \ln 2+v_k(0),
\end{equation}
and as $k\rightarrow \infty$ either a subsequence $v_k\rightarrow v$ in $C^{2m-1,\alpha}_{\loc}(\R{2m})$, where
$$(-\Delta)^m v=Q_0(x_0)e^{2mv},$$
or $v_k\rightarrow -\infty$ almost everywhere and there are positive numbers $\gamma_k\to+\infty$ such that
$$\frac{v_k}{\gamma_k}\to p \quad \textrm{in } C^{2m-1,\alpha}_{\loc}(\R{2m}),$$
where $p$ is a polynomial on even degree at most $2m-2$.
\end{prop}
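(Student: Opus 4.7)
The plan is to normalize the blow-up at $x_0$ so that the rescaled sequence is bounded above, then apply the blow-up analysis of Theorem \ref{main} to the rescaled sequence on $\R{2m}$, exploiting this upper bound to rule out new concentration points. Fix a sequence $R_\ell \downarrow 0$. Since $\sup_{B_{R_\ell}(x_0)} u_k \to \infty$ as $k \to \infty$ for each $\ell$, a diagonal extraction produces $\ell(k) \to \infty$ such that $M_k := \max_{\overline{B_{R_{\ell(k)}}(x_0)}} u_k \to \infty$; let $x_k$ be a maximizer, so $x_k \to x_0$. Set $r_k := e^{-M_k}$ and $v_k(x) := u_k(x_k + r_k x) + \ln r_k$, so that $v_k(0) = 0 \geq -\ln 2$ and $v_k \leq 0$ on $\{x : x_k + r_k x \in B_{R_{\ell(k)}}(x_0)\}$, a set containing $B_{L_k}(0)$ with $L_k \to \infty$. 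The rescaled equation (using $a_k \equiv 1$) reads
\begin{equation*}
(-\Delta)^m v_k = \tilde Q_k\, e^{2m v_k}, \qquad \tilde Q_k(x) := Q_k(x_k + r_k x) \to Q_0(x_0) \text{ locally uniformly,}
\end{equation*}
and the change of variables $y = x_k + r_k x$ yields $\int_{B_L(0)} e^{2m v_k}\,dx \leq \int_\Omega e^{2m u_k} \leq C$ for every $L > 0$ and $k$ large.

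\textbf{Key observation.} Because $v_k \leq 0$ on $B_{L_k}(0)$ with $L_k \to \infty$, we have $e^{2m v_k} \leq 1$ there, and hence for every $x \in \R{2m}$ and $r > 0$,
\begin{equation*}
\limsup_{k \to \infty} \int_{B_r(x)} |\tilde Q_k|\, e^{2m v_k}\, dy \leq \|Q_0\|_{L^\infty}\, \omega_{2m}\, r^{2m} \longrightarrow 0 \quad (r \to 0).
\end{equation*}
So the analogue of $S_1$ for the sequence $(v_k)$ on $\R{2m}$ is empty, and the dichotomy of Theorem \ref{main} applied to $(v_k)$ on each $B_L(0)$ is unobstructed.

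\textbf{Applying the dichotomy.} On $B_{2L}$ I decompose $v_k = w_k + h_k$, where $w_k$ solves the Navier problem with right-hand side $\tilde Q_k e^{2m v_k}$ and $h_k$ is polyharmonic. Since the right-hand side is uniformly $L^\infty$-bounded, Lemma \ref{propmu} (or direct elliptic regularity) gives $\|w_k\|_{C^{2m-1,\alpha}(B_L)} \leq C(L)$. From $v_k \leq 0$ one deduces that $h_k^+ \leq |w_k|$ is uniformly bounded, so via Pizzetti's formula (exactly as in the proof of Theorem \ref{main}) I obtain the alternative: either (A) $\|h_k\|_{L^1(B_L)}$ stays bounded, in which case Proposition \ref{c2m} yields $C^\ell$-bounds on $h_k$ and hence on $v_k$; or (B) $\gamma_k := \|h_k\|_{L^1(B_1)} \to \infty$, and then $h_k/\gamma_k \to \varphi$ in $C^\ell_{\loc}(\R{2m})$ with $\varphi$ polyharmonic, $\varphi \leq 0$ (because $(h_k/\gamma_k)^+ \to 0$ in $L^1$) and $\varphi \not\equiv 0$ (because $\|h_k/\gamma_k\|_{L^1(B_1)} = 1$). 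Diagonalizing in $L$, case (A) yields $v_k \to v$ in $C^{2m-1,\alpha}_{\loc}(\R{2m})$, and passing to the limit in the equation gives $(-\Delta)^m v = Q_0(x_0) e^{2m v}$. In case (B), $w_k/\gamma_k \to 0$ uniformly, so $v_k/\gamma_k \to \varphi$ in $C^{2m-1,\alpha}_{\loc}(\R{2m})$; the classical Liouville theorem for polyharmonic functions on $\R{2m}$ bounded above forces $\varphi$ to be a polynomial of degree at most $2m - 2$, and $\varphi \leq 0$ with $\varphi \not\equiv 0$ forces the degree to be even (the leading homogeneous part would otherwise take arbitrarily large positive values along some ray). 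Finally $\{\varphi = 0\}$, being a proper algebraic subset of $\R{2m}$, has Lebesgue measure zero, so $v_k \to -\infty$ almost everywhere.

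\textbf{Main obstacle.} The technical heart is the diagonalization in case (B): fixing a single normalizing sequence $\gamma_k$ (such as $\|h_k\|_{L^1(B_1)}$) compatible with the decompositions on every ball $B_L$, and ruling out the mixing of cases (A) and (B) across scales. This rests essentially on the absence of concentration points for $(v_k)$ on $\R{2m}$ — the Key observation — which is why the upper bound $v_k \leq 0$ forced by the rescaling is crucial; without it, new concentration points could appear in the rescaled picture and spoil the clean alternative of the statement.
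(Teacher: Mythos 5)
Your overall strategy---rescale so that $v_k\le 0$, observe that the concentration set of the rescaled sequence is then empty, and run the dichotomy of Theorem \ref{main} on $\R{2m}$ together with the Liouville-type Theorem \ref{trmliou}---is exactly the paper's route. The genuine gap is in your selection of $x_k$ and $r_k$, and everything else hangs on it. You take $x_k$ to be a maximum point of $u_k$ over $\overline{B_{R_{\ell(k)}}(x_0)}$ and $r_k=e^{-M_k}$, and claim that the set $\{x: x_k+r_kx\in B_{R_{\ell(k)}}(x_0)\}$, on which $v_k\le 0$, contains balls $B_{L_k}(0)$ with $L_k\to\infty$. That set is the ball of radius $R_{\ell(k)}/r_k$ centered at $(x_0-x_k)/r_k$, and it contains $B_\rho(0)$ only for $\rho\le e^{M_k}\big(R_{\ell(k)}-|x_k-x_0|\big)=e^{M_k}\dist\big(x_k,\partial B_{R_{\ell(k)}}(x_0)\big)$. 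Nothing in your construction keeps the maximizer away from $\partial B_{R_{\ell(k)}}(x_0)$: if, say, a much higher bubble is centered just outside that ball, $x_k$ sits on (or exponentially close to) the boundary, this quantity stays bounded or tends to $0$, the bound $v_k\le 0$ is available only on a region whose boundary passes near the origin, your ``Key observation'' fails there, and new concentration points can appear in the rescaled picture. Since, as you yourself stress in the ``Main obstacle'' paragraph, the clean alternative rests precisely on $v_k\le 0$ on balls exhausting $\R{2m}$, this is an essential gap and not a technicality; it also means \eqref{scal} is not established as stated.

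The paper avoids this with the standard Adimurthi--Robert--Struwe selection: for fixed $R<\dist(x_0,\partial\Omega)$ choose $r_k\in[0,R)$ and $x_k\in\overline{B_{r_k}(x_0)}$ maximizing $(R-r)\sup_{\overline{B_{r}}(x_0)}e^{u_k}=:L_k$, and rescale with $s_k=\frac{R-r_k}{2L_k}$. Comparing the maximized quantity at radius $r_k+\frac{R-r_k}{2}$ with $L_k$ gives $e^{u_k}\le 2e^{u_k(x_k)}$ on $B_{(R-r_k)/2}(x_k)$, i.e. $v_k\le 0$ on $B_{L_k}(0)$ with $L_k\to\infty$, while $v_k(0)=-\ln 2$, so \eqref{scal} holds and the concentration set of $(v_k)$ is empty exactly as in your ``Key observation''. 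From that point on your argument essentially coincides with the paper's: one invokes Theorem \ref{main} for $(v_k)$ (whose proof already contains the covering/mutual-exclusivity step that resolves the coherence-across-balls issue you flag, so there is no need to redo the diagonalization by hand) and Theorem \ref{trmliou} to identify the limit in the degenerate case as a polynomial of degree at most $2m-2$, even because it is $\le 0$ and nontrivial. So the fix is to replace your choice of $(x_k,r_k)$ by this selection; with it, the remainder of your proposal is sound.
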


\begin{proof}
Following \cite{ARS}, take $x_0$ such that $\sup_{B_R(x_0)} u_k \rightarrow +\infty$ for every $R$ and select, for $R<\dist (x_0,\partial\Omega)$, $0\leq r_k<R$ and $x_k\in \overline{B_{r_k}(x_0)}$ such that
$$(R-r_k)e^{u_k(x_k)}=(R-r_k)\sup_{\overline{B_{r_k}(x_0)}} e^{u_k}=\max_{0\leq r<R}\Big((R-r)\sup_{\overline{B_{r}(x_0)}} e^{u_k}\Big)=:L_k.$$
Then $L_k\rightarrow +\infty$ and $s_k:=\frac{R-r_k}{2L_k}\rightarrow 0$ as $k\rightarrow \infty$, and
$$v_k(x):=u_k(x_k+s_k  x)+\ln s_k\leq 0\quad\textrm{in }B_{L_k}(0)$$
satisfies
$$(-\Delta)^m v_k=\widetilde Q_k e^{2mv_k},\quad \widetilde Q_k(x):=Q_k(x_k+s_kx),$$
and
$$\int_{B_{L_k}(0)} \widetilde Q_k e^{2mv_k}dx=\int_{B_{\frac{1}{2}(R-r_k)}(x_k)} Q_k e^{2m u_k}dx\leq C.$$
We can now apply the first part of the theorem to the functions $v_k$, observing that there are no concentration points ($S_1=\emptyset$), since $v_k\leq 0$, and using Theorem \ref{trmliou} to characterize the function $p$.
\end{proof}

\section{The case of a closed manifold}\label{sectionclosed}

To prove Theorem \ref{trmquant} we assume that $\sup_M u_k\to\infty$ and we blow up at $I$ suitably chosen sequences of points $x_{i,k}\to x^{(i)}$ with $u_k(x_{i,k})\to\infty$ as $k\to\infty$, $1\leq i\leq I$. We call the $x^{(i)}$'s concentration points. Then we show the following:

\begin{enumerate}
\item[(i)] If $x^{(i)}$ is a concentration point, then $Q_0(x^{(i)})>0$.
\item[(ii)] The profile of the $u_k$'s at any concentration point is the function $\eta_0$ defined in \eqref{etak} below, hence it carries the fixed amount of energy $\Lambda_1$, see \eqref{etak2}.
\item[(iii)] $u_k\to -\infty$ locally uniformly in $M\backslash\{x^{(i)}:1\leq i\leq I\}$.
\item[(iv)] The \emph{neck energy} vanishes in the sense of \eqref{vanish} below, hence in the limit only the energy of the profiles at the concentration points appears.
\end{enumerate}

Parts (i) and (ii) (Proposition \ref{blow}) follow from Lemma \ref{nabla} below and the classification results of \cite{mar1} (or \cite{xu}) and \cite{mar2}.
For parts (iii) and (iv) we adapt a technique of \cite{DR}, see also also \cite{mal}, \cite{ndi} for a different approach.

\medskip

The following lemma (compare \cite[Lemma 2.3]{mal}) is important, because its failure in the non-compact case is responsible for the rich concentration-compactness behavior in Theorem \ref{main}. Its proof relies on the existence and on basic properties of the Green function for the Paneitz operator $P_g^{2m}$, as proven in \cite[Lemma 2.1]{ndi} (here we need the hypothesis $\ker P^{2m}_g=\{constants\}$).

\begin{lemma}\label{nabla} Let $(u_k)$ be a sequence of functions on $(M,g)$ satisfying \eqref{presc} and \eqref{volume}. Then for $\ell=1,\ldots, 2m-1$, we have
$$\int_{B_r(x)}|\nabla^\ell u_k|^p \dvol_g\leq C(p) r^{2m-\ell p}, \quad 1\leq p<\frac{2m}{\ell},$$
for every $x\in M$, $0<r<\ring$ and for every $k$, where $\ring$ is the injectivity radius of $(M,g)$.
\end{lemma}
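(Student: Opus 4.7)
The idea is to represent $u_k$ against the Green's function for $P^{2m}_g$ and then reduce the claimed estimate to a uniform bound on an $L^p$-norm of the kernel $d(x,y)^{-\ell}$ on small balls.

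\emph{Step 1: Green's function representation.} Since $\ker P^{2m}_g=\{\text{constants}\}$, by \cite[Lemma 2.1]{ndi} there exists a Green's function $G(x,y)$ for $P^{2m}_g$, symmetric in $(x,y)$, satisfying
\[
P^{2m}_{g,y}G(x,\cdot)=\delta_x-\frac{1}{\vol(M,g)},\qquad \int_M G(x,y)\dvol_g(y)=0,
\]
and enjoying the pointwise derivative estimates
\[
|\nabla^\ell_x G(x,y)|\leq C\, d_g(x,y)^{-\ell},\qquad 1\leq \ell\leq 2m-1.
\]
Setting $\bar u_k:=\intm_M u_k\dvol_g$ and using \eqref{presc}, Green's identity gives
\[
u_k(x)-\bar u_k=\int_M G(x,y)\bigl(Q_k(y)e^{2mu_k(y)}-Q^{2m}_g(y)\bigr)\dvol_g(y).
\]

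\emph{Step 2: Pointwise bound on derivatives.} Differentiating under the integral and using the kernel estimate from Step 1,
\[
|\nabla^\ell u_k(x)|\leq C\int_M d_g(x,y)^{-\ell}\, F_k(y)\dvol_g(y),\quad F_k:=|Q_k|e^{2mu_k}+|Q^{2m}_g|.
\]
By \eqref{volume} and the uniform bound on $\|Q_k\|_{L^\infty}$ (together with the smoothness of $Q^{2m}_g$), we have $\|F_k\|_{L^1(M)}\leq C$ uniformly in $k$.

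\emph{Step 3: Kernel $L^p$-estimate, uniform in the base point.} The key technical point is to show that, for $1\leq p<2m/\ell$ and $0<r<\ring$,
\[
\sup_{y\in M}\int_{B_r(x)} d_g(z,y)^{-\ell p}\dvol_g(z)\leq C\, r^{2m-\ell p}.
\]
This is done by splitting on the distance $d_g(y,x)$: for $d_g(y,x)\leq 2r$ one has $B_r(x)\subset B_{3r}(y)$ and the bound follows from polar integration on $M$ (using $\ell p<2m$); for $d_g(y,x)>2r$ one has $d_g(z,y)>r$ on $B_r(x)$, giving $\int_{B_r(x)} d_g(z,y)^{-\ell p}\dvol_g\leq |B_r(x)|\, r^{-\ell p}\leq Cr^{2m-\ell p}$.

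\emph{Step 4: Conclusion via Minkowski.} Applying Minkowski's inequality in integral form to the representation in Step 2,
\[
\|\nabla^\ell u_k\|_{L^p(B_r(x))}\leq C\int_M \Bigl(\int_{B_r(x)} d_g(z,y)^{-\ell p}\dvol_g(z)\Bigr)^{1/p}F_k(y)\dvol_g(y),
\]
and plugging in Step 3 together with $\|F_k\|_{L^1}\leq C$ yields
\[
\|\nabla^\ell u_k\|_{L^p(B_r(x))}\leq C\, r^{(2m-\ell p)/p},
\]
which is the claim after raising to the $p$-th power.

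\emph{Main obstacle.} The only nontrivial ingredient is the derivative estimate on $G$ in Step 1; once this is imported from \cite{ndi}, everything else is a clean application of Minkowski and elementary polar integration. The compactness of $M$ is essential, as it rules out a tail contribution from $y$ at infinity — exactly the failure mode that on $\R{2m}$ forces the richer concentration-compactness picture in Theorem \ref{main}.
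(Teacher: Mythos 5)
Your proposal is correct and follows essentially the same route as the paper: the Green's function representation for $P^{2m}_g$ with the derivative bounds $|\nabla^\ell_\xi G_\xi(x)|\leq C\dist(x,\xi)^{-\ell}$ from \cite[Lemma 2.1]{ndi}, the uniform $L^1$ bound on $Q_ke^{2mu_k}-Q^{2m}_g$ from \eqref{volume}, and the uniform-in-$y$ kernel estimate $\int_{B_r(x)}\dist(\cdot,y)^{-\ell p}\dvol_g\leq Cr^{2m-\ell p}$. The only cosmetic difference is that you combine these via Minkowski's integral inequality where the paper uses Jensen's inequality plus Fubini, and you spell out the case-splitting proof of the kernel bound that the paper states without proof.
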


\begin{proof} Set $f_k:=Q_ke^{2mu_k}-Q^{2m}_{g}$, which is bounded in $L^1(M)$ thanks to \eqref{volume}. Let $G_\xi$ be the Green's function for $P^{2m}_g$ on $(M,g)$ such that
\begin{equation}\label{rapp}
u_k(\xi)=\Intm_M u_k \dvol_g +\int\limits_{M}G_\xi(y)f_k(y)\dvol_g(y).
\end{equation}
For $x,\xi\in M$, $x\neq \xi$, \cite[Lemma 2.1]{ndi} implies
\begin{equation}\label{nablag}
|\nabla^\ell_\xi G_\xi(x)|\leq \frac{C}{\dist(x,\xi)^\ell},\quad 1\leq \ell\leq 2m-1.
\end{equation}
Then, differentiating \eqref{rapp} and using \eqref{nablag} and Jensen's inequality, we get

\begin{eqnarray*}
|\nabla^\ell u_k(\xi)|^p&\leq&C\bigg(\int_{M} \frac{1}{\dist(\xi,y)^\ell}|f_k(y)|\dvol_g(y)\bigg)^p\\
&\leq&C \int_{M}\bigg(\frac{\|f_k\|_{L^1(M)}}{\dist(\xi,y)^{\ell}}\bigg)^p \frac{|f_k(y)|}{\|f_k\|_{L^1(M)}}\dvol_g(y).
\end{eqnarray*}
From Fubini's theorem we then conclude
\begin{eqnarray*}
\int_{B_r(x)}|\nabla^\ell u_k(\xi)|^p \dvol_g(\xi)&\leq&C\|f_k\|_{L^1(M)}^p\sup_{y\in M}\int_{B_r(x)}\frac{1}{\dist(\xi,y)^{\ell p}}\dvol_g(\xi)\\
&\leq&C r^{2m-\ell p}.
\end{eqnarray*}
\end{proof}

Let $\exp_x:T_xM \cong \R{2m}\to M$ denote the exponential map at $x$.

\begin{prop}\label{blow} Let $(u_k)$ be a sequence of solutions to \eqref{presc}, \eqref{volume} with $\max u_k\to \infty$ as $k\to \infty$. Choose points $x_k\to x_0\in M$ (up to a subsequence) such that $u_k(x_k)=\max_M u_k$. Then $Q_0(x_0)>0$ and, setting
\begin{equation}\label{defmuk}
\mu_k:=2\bigg(\frac{(2m-1)!}{Q_0(x_0)}\bigg)^\frac{1}{2m}e^{-u_k(x_k)}\end{equation}
we find that the functions $\eta_k:B_\frac{\ring}{\mu_k}\subset\R{2m}\to\R{}$, given by
$$\eta_k(y):=u_k(\exp_{x_k}(\mu_k y))+\log \mu_k-\frac{1}{2m}\log\frac{(2m-1)!}{Q_0(x_0)},$$
converge up to a subsequence to $\eta_0(y)=\ln\frac{2}{1+|y|^2}$ in $C^{2m-1,\alpha}_{\loc}(\R{2m})$.
Moreover
\begin{equation}\label{energia}
\lim_{R\to+\infty}\lim_{k\to\infty}\int_{B_{R \mu_k}(x_k)}Q_k e^{2mu_k}\dvol_g=\Lambda_1.
\end{equation}
\end{prop}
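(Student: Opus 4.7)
The plan is a blow-up at the maximum point of $u_k$, followed by identification of the limit profile using the classifications in \cite{mar1}, \cite{xu} and \cite{mar2}; the crucial extra input, compared with the open-domain case of Theorem \ref{main}, is that Lemma \ref{nabla} rules out the polynomial-growth solutions that could otherwise compete with the standard bubble.

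I would begin with the naive rescaling $\tilde\mu_k:=e^{-u_k(x_k)}$ and
$$\tilde\eta_k(y):=u_k(\exp_{x_k}(\tilde\mu_k y))+\log\tilde\mu_k,\quad y\in B_{\ring/\tilde\mu_k}(0),$$
so that $\tilde\eta_k(0)=0=\max \tilde\eta_k$. Pulling back $g$ via $\exp_{x_k}$ and rescaling by $\tilde\mu_k^{-2}$ gives a sequence of smooth metrics $g_k$ converging to $g_{\R{2m}}$ locally in every $C^\ell$, and \eqref{presc} becomes
$$(-\Delta_{g_k})^m\tilde\eta_k=Q_k(\exp_{x_k}(\tilde\mu_k\,\cdot))\,e^{2m\tilde\eta_k}-\tilde\mu_k^{2m}Q_g^{2m}(\exp_{x_k}(\tilde\mu_k\,\cdot)),$$
the second term being uniform $O(\tilde\mu_k^{2m})$ on compacta. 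Combining $\tilde\eta_k\leq 0$ with the scale-invariant estimate
$$\int_{B_r(0)}|\nabla^\ell \tilde\eta_k|^p\,dy\leq C\,r^{2m-\ell p},\quad 1\leq \ell\leq 2m-1,\ 1\leq p<2m/\ell,$$
that follows from Lemma \ref{nabla} by changing variables, I would bootstrap through Poincar\'e--Sobolev and the elliptic regularity of $(-\Delta_{g_k})^m$ to $C^{2m-1,\alpha}_{\loc}$-compactness of $(\tilde\eta_k)$. A subsequence then converges to some $\tilde\eta\in C^{2m-1,\alpha}_{\loc}(\R{2m})$ solving $(-\Delta)^m\tilde\eta=Q_0(x_0)\,e^{2m\tilde\eta}$ with $\tilde\eta\leq 0$, $\tilde\eta(0)=0$, and $\int_{\R{2m}}e^{2m\tilde\eta}\,dy<\infty$ (integrability from \eqref{volume} via the change-of-variable identity $\int_{B_R}e^{2m\tilde\eta_k}\,\dvol_{g_k}=\int_{B_{R\tilde\mu_k}(x_k)}e^{2mu_k}\,\dvol_g\leq C$).

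The hard step — ruling out $Q_0(x_0)\leq 0$ and identifying the profile — rests on passing the Lemma \ref{nabla} estimate to the limit, giving $\int_{B_r}|\nabla\tilde\eta|^p\,dy\leq C\,r^{2m-p}$ on all of $\R{2m}$ for every $1\leq p<2m$. A direct degree count shows this is incompatible with any polynomial asymptotic behaviour of degree $\geq 1$ at infinity. This rules out, in turn, the non-spherical finite-volume solutions for $Q_0(x_0)>0$ from \cite{mar1}, the finite-volume solutions of negative constant curvature from \cite{mar2} (which necessarily blow down polynomially), and the case $Q_0(x_0)=0$ — since then $\tilde\eta$ would be polyharmonic and bounded above with the same growth bound, forced by Liouville to be a constant $\leq 0$ with $\tilde\eta(0)=0$, so $\tilde\eta\equiv 0$, contradicting $\int e^{2m\tilde\eta}<\infty$. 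Hence $Q_0(x_0)>0$ and the classification of \cite{mar1}, \cite{xu} identifies $\tilde\eta$ with a spherical solution; the conditions $\tilde\eta\leq 0$ and $\max\tilde\eta=\tilde\eta(0)$ fix the centre and scale, so that, after replacing $\tilde\mu_k$ by the dilation-corrected $\mu_k$ of \eqref{defmuk}, the corresponding $\eta_k$ converge to $\eta_0(y)=\log\frac{2}{1+|y|^2}$.

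Finally, the energy identity \eqref{energia} is a change of variables. Using $e^{2m\eta_k(y)}=\mu_k^{2m}\frac{Q_0(x_0)}{(2m-1)!}\,e^{2mu_k(\exp_{x_k}(\mu_k y))}$, I would rewrite
$$\int_{B_{R\mu_k}(x_k)}Q_k\,e^{2mu_k}\,\dvol_g=\frac{(2m-1)!}{Q_0(x_0)}\int_{B_R(0)}Q_k(\exp_{x_k}(\mu_k y))\,e^{2m\eta_k(y)}\,\dvol_{g_k}(y).$$
Since $Q_k\to Q_0$ in $C^1$, $g_k\to g_{\R{2m}}$ and $\eta_k\to\eta_0$ in $C^{2m-1,\alpha}_{\loc}$, the right-hand side converges as $k\to\infty$ to $(2m-1)!\int_{B_R}e^{2m\eta_0}\,dy$, and sending $R\to\infty$ gives $(2m-1)!\,|S^{2m}|=\Lambda_1$, because $e^{2\eta_0}g_{\R{2m}}$ is the pullback of the round metric on $S^{2m}$ under the inverse stereographic projection.
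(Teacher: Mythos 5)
Your strategy is essentially the paper's: blow up at the maximum point with scale $e^{-u_k(x_k)}$, obtain local $C^{2m-1,\alpha}$-compactness from the scale-invariant bounds of Lemma \ref{nabla} plus elliptic bootstrap, identify the limit via the classifications of \cite{mar1} and \cite{mar2}, and get \eqref{energia} by the change of variables and \eqref{etak2}; doing a single rescaling and renormalizing afterwards to the $\mu_k$ of \eqref{defmuk} instead of the paper's two-stage rescaling is harmless. One small inaccuracy: your displayed rescaled equation $(-\Delta_{g_k})^m\tilde\eta_k=\hat Q_k e^{2m\tilde\eta_k}+O(\tilde\mu_k^{2m})$ is not correct as written, since the rescaled Paneitz operator is $(-\Delta_{\hat g_k})^m+A_k$ with $A_k$ of order at most $2m-1$; its coefficients tend to $0$ but it acts on the a priori unbounded $\tilde\eta_k$, and controlling $A_k\tilde\eta_k$ in $L^p$ is precisely where Lemma \ref{nabla} enters the bootstrap. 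Since you do invoke those bounds, this is a presentational slip rather than a gap.

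The genuine weak point is the exclusion step. To rule out $Q_0(x_0)<0$ and the non-spherical profiles when $Q_0(x_0)>0$ you pass to the limit only the $\ell=1$ estimate $\int_{B_r}|\nabla\tilde\eta|^p\,dy\leq Cr^{2m-p}$ and then invoke a ``direct degree count'' against ``polynomial asymptotic behaviour'' of the competing solutions. But what \cite{mar1} and \cite{mar2} give (and what the paper uses) is a statement about higher derivatives: there exist $1\leq j\leq m-1$ and $a\neq 0$ with $\Delta^j\tilde\eta\to a$ at infinity --- in the negative-curvature case only along a cone of positive measure. This does not directly yield a pointwise or averaged lower bound on $|\nabla\tilde\eta|$, so the first-derivative bound alone does not close these cases without importing finer asymptotics from those papers; your degree count is literally valid only when $Q_0(x_0)=0$ and $m\geq 2$, where $\tilde\eta$ is an exact polynomial (and note that there Theorem \ref{trmliou} gives a polynomial of degree at most $2m-2$, not a constant, so you still need the gradient bound to conclude --- that part is fine). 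The repair is exactly the paper's argument: use Lemma \ref{nabla} with $\ell=2j$, either scaled back to $u_k$ as $\int_{B_{R\sigma_k}(x_k)}|\nabla^{2j}u_k|\dvol_g\leq C(R\sigma_k)^{2m-2j}$, or passed to the limit as $\int_{B_r}|\nabla^{2j}\tilde\eta|\,dy\leq Cr^{2m-2j}$; this contradicts $\int_{B_r}|\Delta^j\tilde\eta|\,dy\geq cr^{2m}+o(r^{2m})$, which does follow from $\Delta^j\tilde\eta\to a\neq0$ on a cone. You already recorded these higher-order bounds in your compactness step, so the fix costs nothing, but as written the crucial step is not justified.
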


\noindent\emph{Remark.} The function
\begin{equation}\label{etak}
\eta_0(x):=\log\frac{2}{1+|x|^2}
\end{equation}
satisfies $(-\Delta)^m \eta_0 =(2m-1)!e^{2m\eta_0}$, which is \eqref{eqliou} with $Q_k\equiv (2m-1)!$ and $a_k\equiv 1$. In fact $\eta_0$ has a remarkable geometric interpretation: If $\pi:S^{2m}\to \R{2m}$ is the stereographic projection, then
\begin{equation}\label{proj}
e^{2\eta_0}g_{\R{2m}}=(\pi^{-1})^* g_{S^{2m}},
\end{equation}
where $g_{S^{2m}}$ is the round metric on $S^{2m}$.
Then \eqref{proj} implies
\begin{equation}\label{etak2}
(2m-1)!\int_{\R{2m}}e^{2m \eta_0}dx=\int_{S^{2m}}Q_{S^{2m}}\mathrm{dvol}_{g_{S^{2m}}}=(2m-1)!|S^{2m}|=\Lambda_1.
\end{equation}

\medskip

\noindent\emph{Proof of Proposition \ref{blow}.}  \emph{Step 1.} Set $\sigma_k=e^{-u_k(x_k)}$, and consider on $B_{\frac{\ring}{\sigma_k}}\subset\R{2m}$ the functions
\begin{equation}\label{eq*}
z_k(y):=u_k(\exp_{x_k}(\sigma_k y))+\log(\sigma_k)\leq 0,
\end{equation}
and the metrics
$$\tilde g_k:= (\exp_{x_k}\circ T_k)^* g,$$
where $T_k:\R{2m}\to\R{2m}$, $T_ky=\sigma_k y$.
Then, setting $\hat Q_k(y):=Q_k(\exp_{x_k}(\sigma_k y))$, and pulling back \eqref{presc} via $\exp_{x_k}\circ T_k$, we get
\begin{equation}\label{utilde}
P^{2m}_{\tilde g_k} z_k+Q^{2m}_{\tilde g_k}=\sigma_k^{-2m}\hat Q_ke^{2mz_k}.
\end{equation}
Setting now $\hat g_k:=\sigma_k^{-2} \tilde g_k$, we have $P^{2m}_{\hat g_k}=\sigma_k^{2m}P^{2m}_{\tilde g_k}$, $Q^{2m}_{\hat g_k}=\sigma_k^{2m} Q^{2m}_{\tilde g_k}$, and from \eqref{utilde} we infer
\begin{equation}\label{uhat}
P^{2m}_{\hat g_k} z_k+Q^{2m}_{\hat g_k}=\hat Q_ke^{2mz_k}.
\end{equation}
Then, since the principal part of the Paneitz operator is $(-\Delta_g)^m$, we can write
$$P_{\hat g_k}=(-\Delta_{\hat g_k})^m+A_k,$$
where $A_k$ is a linear differential operator of order at most $2m-1$; moreover the coefficients of $A_k$ are going to $0$ in $C^k_{\loc}(\R{2m})$ for all $k\geq 0$, since $\hat g_k\to g_{\R{2m}}$ in $C^k_{\loc}(\R{2m})$ for all $k\geq 0$, and $P_{g_{\R{2m}}}=(-\Delta)^m$. Then \eqref{uhat} can be written as
\begin{equation}\label{Deltazk}
(-\Delta_{\hat g_k})^m z_k +A_kz_k +Q^{2m}_{\hat g_k}=\hat Q_ke^{2mz_k}.
\end{equation}

\medskip

\noindent \emph{Step 2.} We now claim that $z_k\to z_0$ in $C^{2m-1,\alpha}_{\loc}(\R{2m})$, where
\begin{equation}\label{z0}
(-\Delta)^m z_0 =Q_0(x_0)e^{2m z_0},\quad \int_{\R{2m}} e^{2m z_0}dx<\infty.
\end{equation}
We first assume $m>1$. Fix $R>0$ and write $z_k=h_k+w_k$ on $B_R=B_R(0)$, where $\Delta_{\hat g_k}^m h_k=0$ and 
\begin{equation}\label{wk10}
\left\{
\begin{array}{ll}
(-\Delta_{\hat g_k})^m w_k=(-\Delta_{\hat g_k})^m z_k.&\textrm{in } B_R\\
w_k=\Delta w_k=\ldots=\Delta^{m-1}w_k=0&\textrm{on }\partial B_R
\end{array}
\right.
\end{equation}
From $z_k\leq 0$ we infer $\|\hat Q_ke^{2m z_k}\|_{L^\infty(B_R)}\leq C$, and clearly $Q^{2m}_{\hat g_k}=\sigma_k^{2m} Q^{2m}_{\tilde g_k}\to 0$ in $L^\infty_{\loc}(\R{2m})$.
Lemma \ref{nabla} implies that $(A_kz_k)$ is bounded in $L^p(B_R)$, $1\leq p<\frac{2m}{2m-1}$, hence from \eqref{wk10} and elliptic estimates we get uniform bounds for $(w_k)$ in $W^{2m,p}(B_R)$, $1\leq p<\frac{2m}{2m-1}$, hence in $C^0(B_R)$.
Again using Lemma \ref{nabla}, we get
$$\|\Delta_{\hat g_k} h_k\|_{L^1(B_R)}\leq C( \|z_k\|_{W^{2,1}(B_R)}+\| w_k\|_{W^{2,1}(B_R)})\leq C.$$
Since $\Delta_{\hat g_k}^{m-1}(\Delta_{\hat g_k} h_k)=0,$ elliptic estimates (compare Proposition \ref{c2m}) give 
\begin{equation}\label{hk10}
\|\Delta_{\hat g_k} h_k\|_{C^\ell(B_{R/2})}\leq C(\ell)\quad \textrm{for every }\ell\in \mathbb{N}.
\end{equation}
This, together with $|h_k(0)|=|w_k(0)|\leq C$, and $h_k\leq -w_k\leq C$ and elliptic estimates (e.g. \cite[Thm. 8.18]{GT}), implies that
$\|h_k\|_{L^1(B_{R/2})}\leq C,$ hence, again using elliptic estimates,  
\begin{equation}\label{hk3'}
\|h_k\|_{C^\ell(B_{R/4})}\leq C(\ell) \quad\textrm{for every }\ell\in\mathbb{N}.
\end{equation}
Therefore $(z_k)$ is bounded in $W^{2m,p}(B_{R/4})$, $1\leq p<\frac{2m}{2m-1}$. 
We now go back to \eqref{wk10}, replacing $R$ with $R/4$ and redefining $h_k$ and $w_k$ accordingly on $B_{R/4}$.
We now have that $(A_kz_k)$ is bounded in $L^p(B_{R/4})$ for $1\leq p<\frac{2m}{2m-2}$ by Sobolev's embedding, and we infer as above that $(w_k)$ is bounded in $W^{2m,p}(B_{R/4})$, $1\leq p<\frac{2m}{2m-2}$, and $h_k$ is bounded in $C^\ell(B_{R/16})$, $\ell\geq 0$. Iterating, we find that $(z_k)$ is bounded in $W^{2m,p}(B_{R/4^{2m}})$ for every $p\in [1,\infty[$. By letting $R\to \infty$ and extracting a diagonal subsequence, we infer that $(z_k)$ converges in $C^{2m-1,\alpha}_{\loc}(\R{2m})$. Then \eqref{z0} follows from Fatou's lemma, letting $R\to \infty$, and the claim is proven.

When $m=1$, since $P^2_g=-\Delta_g$, \eqref{uhat} implies at once that $(\Delta_{\hat g_k} z_k)$ is locally bounded in $L^\infty$. Then, since $z_k\leq 0$ and $z_k(0)=0$, the claim follows from elliptic estimates (e.g. \cite[Thm. 8.18]{GT}).

\medskip

\noindent \emph{Step 3.} We shall now rule out the possibility that $Q_0(x_0)\leq 0$.

\medskip

\noindent \emph{Case }$Q_0(x_0)=0$. By the maximum principle one sees that, for $m=1$, \eqref{z0} has no solution (see e.g. \cite[Thm. 3]{mar2}), contradiction. If $m\geq 2$, still by \cite[Thm. 3]{mar2}, any solution $z_0$ to \eqref{z0} is a non-constant polynomial of degree at most $2m-2$, and there are $1\leq j\leq m-1$ and $a<0$ such that $\Delta^j z_0\equiv a$. Following an argument of \cite{RS}, see also \cite{mal}, we shall find a contradiction. Indeed we have
$$\lim_{k\to\infty}\int_{B_R}|\Delta^j z_k|dx = \int_{B_R}|\Delta^j z_0|dx=\frac{|a|\omega_{2m}}{2m}R^{2m}+o(R^{2m}),\quad \textrm{as } R\to+\infty.$$
Scaling back to $u_k$, we find
$$\lim_{k\to\infty}\bigg(\sigma_k^{2j-2m}\int_{B_{R\sigma_k}(x_k)}|\nabla^{2j} u_k|\dvol_g\bigg)\geq C^{-1}R^{2m}+o(R^{2m}),\quad \textrm{as } R\to+\infty,$$
while, from Lemma \ref{nabla},
\begin{equation}\label{deltac}
\int_{B_{R\sigma_k}(x_k)}|\nabla^{2j} u_k| \dvol_g \leq C(R\sigma_k)^{2m-2j}.
\end{equation}
This yields the desired contradiction as $k,R\to+\infty$.

\medskip

\noindent \emph{Case }$Q_0(x_0)<0$. By \cite[Thm. 1]{mar2} there exists no solution to \eqref{z0} for $m=1$, a contradiction. If $m\geq 2$, from \cite[Thm. 2]{mar2} we infer that there are a constant $a\neq 0$ and $1\leq j\leq m-1$ such that
$$\lim_{\substack{|x|\to+\infty\\ x\in \M{C}}}\Delta^j z_0(x)=a,$$
where $\M{C}:=\{t\xi\in \R{2m}:t\geq 0, \; \xi\in K\}$ and $K\subset S^{2m-1}$ is a compact set with $\M{H}^{2m-1}(K)>0$. Then, as above,
\begin{eqnarray*}
\lim_{k\to\infty}\bigg(\sigma_k^{2j-2m}\int_{B_{R\sigma_k}(x_k)}|\nabla^{2j} u_k|\dvol_g\bigg)&\geq&C^{-1} \int_{B_R\cap \M{C}}|\Delta^j z_0|dx\\
&\geq& C^{-1} R^{2m}+o(R^{2m}),
\end{eqnarray*}
again contradicting \eqref{deltac}. Then we have shown that $Q_0(x_0)>0$.

\medskip

\noindent \emph{Step 4.} Since $Q_k(x_0)>0$, $\mu_k$ and $\eta_k$ are well-defined. Repeating the procedure of Step 2, we find a function $\overline \eta\in C^{2m-1,\alpha}_{\loc}(\R{2m})$ such that $\eta_k\to \overline\eta $ in $C^{2m-1,\alpha}_{\loc}(\R{2m}),$
where (compare \eqref{z0})
$$(-\Delta)^m \overline\eta=(2m-1)!e^{2m\overline\eta},\quad \int_{\R{2m}}e^{2m\overline\eta}dx<+\infty.$$
By \cite[Thm. 2]{mar1}, either $\overline\eta$ is a standard solution, i.e. there are $x_0\in\R{2m}$, $\lambda>0$ such that
\begin{equation}\label{v0lambda}
\overline\eta(y)=\log\frac{2\lambda}{1+\lambda^2|y-y_0|^2},
\end{equation}
or $\Delta^j \overline\eta(x)\to a$ as $|x|\to\infty$ for some constant $a<0$ and for some $1\leq j\leq m-1$. In the latter case, as in Step 3, we reach a contradiction. Hence \eqref{v0lambda} is satisfied.
Since $\max_M \eta_k=\eta_k(0)=\log 2$ for every $k$, we have $y_0=0$, $\lambda=1$, i.e. $\overline \eta=\eta_0$.
Since, by Fatou's lemma
$$\lim_{R\to\infty}\lim_{k\to\infty}\int_{R\mu_k(x_k)}Q_k e^{2mu_k}\dvol_g =(2m-1)!\int_{\R{2m}}e^{2m\eta_0}dx,$$
\eqref{energia} follows from \eqref{etak2}.
\hfill$\square$

\medskip

\noindent\emph{Proof of Theorem \ref{trmquant}.} Assume first that $u_k\leq C$. Then $P_g^{2m} u_k$ is bounded in $L^\infty(M)$ and Lemma \ref{nabla} and by elliptic estimates $u_k-\overline u_k$ is bounded in $W^{2m,p}(M)$ for every $1\leq p<\infty,$ hence in $C^{2m-1,\alpha}(M)$ for every $\alpha\in [0,1[$, where $\overline u_k:=\intm_M u_k \dvol_g$. Observe that by Jensen's inequality and \eqref{volume}, $\overline u_k\leq C$.

If $\overline u_k$ remains bounded (up to a subsequence), then by Ascoli-Arzel\`a's theorem, for every $\alpha\in [0,1[$, $u_k$ is convergent (up to a subsequence) in $C^{2m-1,\alpha}(M)$, and we are in case (i) of Theorem \ref{trmquant}. 

If $\overline u_k\to-\infty$, we have that $u_k\to-\infty$ uniformly on $M$ and we are in case (ii) of the theorem, with $S_1=\emptyset$.

From now on we shall assume that $\max_M u_k\to \infty$ as $k\to \infty$, and closely follow the argument of \cite{DR}.

\medskip

\noindent\emph{Step 1.} There are $I>0$ converging sequences $x_{i,k}\to x^{(i)}\in M$ with $u_k(x_{i,k})\to\infty$ as $k\to\infty$, such that
\begin{enumerate}

\item[($A_1$)] $Q_0(x^{(i)})>0$, $1\leq i\leq I$.

\item[($A_2$)] $\frac{\dist(x_{i,k},x_{j,k})}{\mu_{i,k}}\to+\infty$ as $k\to +\infty$ for all $1\leq i,j\leq I$, $i\neq j$, where
$$\mu_{i,k}:=2\bigg(\frac{(2m-1)!}{Q_0(x^{(i)})}\bigg)^\frac{1}{2m}e^{-u_k(x_{i,k})}.$$

\item[($A_3$)] Set $\eta_{i,k}(y):=u_k(\exp_{x_{i,k}}(\mu_{i,k}y))-u_k(x_{i,k})$. Then for $1\leq i\leq I$
\begin{equation}\label{vkv0}
\eta_{i,k}(y)\to \eta_0(y)=\log\frac{2}{1+|y|^2} \quad \textrm{in } C^{2m}_{\loc}(\R{2m})\quad (k\to\infty).
\end{equation}

\item[($A_4$)] For $1\leq i\leq I$
\begin{equation}\label{eqA4}
\lim_{R\to+\infty}\lim_{k\to+\infty}\int_{B_{R\mu_{i,k}}(x_{i,k})}Q_k e^{2mu_k}dx\to \Lambda_1.
\end{equation}

\item[($A_5$)] There exists $C>0$ such that for all $k$
$$\sup_{x\in M}\big[e^{u_k(x)}R_k(x)\big]\leq C, \quad R_k(x):=\min_{1\leq i\leq I}\dist(x,x_{i,k}).$$
\end{enumerate}
Step 1 follows from Proposition \ref{blow} and induction as follows. Define $x_{1,k}=x_k$ as in Proposition \ref{blow}. Then ($A_1$), ($A_3$) and ($A_4$) are satisfied with $i=1$. If
$\sup_{x\in M}\big[e^{u_k(x)}\dist(x_{1,k},x)\big]\leq C,$
then $I=1$ and also ($A_5$) is satisfied, so we are done. Otherwise we choose $x_{2,k}$ such that
\begin{equation}\label{r1k}
R_{1,k}(x_{2,k})e^{u_k(x_{2,k})}=\max_{x\in M} R_{1,k}(x)e^{u_k(x)}\to\infty, \quad R_{1,k}(x):=\dist (x,x_{1,k}).
\end{equation}
Then ($A_2$) with $i=2$, $j=1$ follows at once from \eqref{r1k}, while ($A_2$) with $i=1$, $j=2$ follows from ($A_3$), as in \cite{DR}.
A slight modification of Proposition \ref{blow} shows that $(x_{2,k}, \mu_{2,k})$ satisfies ($A_1$), ($A_3$) and ($A_4$), and we continue so, until also property ($A_5$) is satisfied. The procedure stops after finitely many steps, thanks to ($A_2$), ($A_4$) and \eqref{volume}.

\medskip

\noindent\emph{Step 2.} With the same proof as in Step 2 of \cite[Thm. 1]{DR}:
\begin{equation}\label{ddist}
\sup_{x\in M} R_k(x)^\ell|\nabla^\ell u_k(x)|\leq C,\quad \ell=1,2,\ldots,2m-1.
\end{equation}

\medskip

\noindent\emph{Step 3.} $u_k\to-\infty$ locally uniformly in $M\backslash S_1$, $S_1:=\{x^{(i)}:1\leq i\leq I\}$. This follows easily from \eqref{ddist} above and \eqref{nu} below (which implies that $u_k\to-\infty$ locally uniformly in $B_{\delta_\nu}(x^{(i)})\backslash \{x^{(i)}\}$ for any $1\leq i\leq I$, $\nu\in [1,2[$ and $\delta_\nu$ as in Step 4), but we also sketch an instructive alternative proof, which does not make use of \eqref{nu}.

Our Theorem \ref{main} can be reproduced on a closed manifold, with a similar proof and using Proposition 3.1 from \cite{mal} instead of Theorem \ref{a2m} above. Then either

\begin{itemize}
\item[(a)] $u_k$ is bounded in $C^{2m-1,\alpha}_{\loc}(M\backslash S_1)$, or
\item[(b)] $u_k\to -\infty$ locally uniformly in $M\backslash S_1$, or
\item[(c)] There exists a closed set $S_0\subset M\backslash S_1$ of Hausdorff dimension at most $2m-1$ and numbers $\beta_k\to+\infty$ such that
\begin{equation}\label{varphi1}
\frac{u_k}{\beta_k}\to\varphi \textrm{ in } C^{2m-1,\alpha}_{\loc}(M\backslash (S_0 \cup S)),
\end{equation}
where
\begin{equation}\label{varphi2}
\Delta_g^m\varphi \equiv 0,\quad \varphi\leq 0,\quad \varphi\not\equiv 0\quad \text{on }M\backslash S_1,\quad \varphi\equiv 0\textrm{ on } S_0.
\end{equation}
\end{itemize} 
Case (a) can be ruled out using \eqref{volume} as in \eqref{fatou} at the end of the proof of Theorem \ref{main}. Case (c) contradicts Lemma \ref{nabla}, by considering any ball $B_R(x_0)\subset\subset \Omega\backslash S_1$ with $\int_{B_R(x_0)}|\nabla\varphi|\dvol_g>0$ and using \eqref{varphi1}.
Hence Case (b) occurs, as claimed.

\medskip

\noindent\emph{Step 4.}
We claim that for every $1\leq \nu<2$, there exist $\delta_\nu>0$ and $C_\nu>0$ such that for $1\leq i\leq I$
\begin{equation}\label{nu}
\dist(x,x_{i,k})^{2m\nu}e^{2mu_k(x)}\leq C_\nu\mu_{i,k}^{2m(\nu-1)},\quad \textrm{for } x\in B_{\delta_\nu}(x_{i,k}).
\end{equation}
Then on the \emph{necks} $\Sigma_{i,k}:=B_{\delta_\nu}(x_{i,k})\backslash B_{R\mu_{i,k}}(x_{i,k})$ we have
\begin{eqnarray*}
\int_{\Sigma_{i,k}}e^{2mu_k}\dvol_g&\leq& C_\nu\mu_{i,k}^{2m(\nu-1)}\int_{\Sigma_{i,k}}\dist(x,x_{i,k})^{-2m\nu}\dvol_g(x)\\
&\leq &C_\nu\mu_{i,k}^{2m(\nu-1)}\int_{R\mu_{i,k}}^{\delta_\nu}r^{2m-1-2m\nu}dr\\
&=&C_\nu R^{2m(1-\nu)}-C_\nu \mu_{i,k}^{2m(\nu-1)}\delta_\nu^{2m(1-\nu)},
\end{eqnarray*}
whence
\begin{equation}\label{vanish}
\lim_{R\to+\infty}\lim_{k\to +\infty}\int_{\Sigma_{i,k}}Q_ke^{2mu_k}\dvol_g =0.
\end{equation}
This, together with \eqref{energia} and Step 3 implies \eqref{QgL}, assuming that $x^{(i)}\neq x^{(j)}$ for $i\neq j$. This we be shown in Step 4c below. Then \eqref{intQg} follows at once from \eqref{total}.

\medskip

Let us prove \eqref{nu}.
Fix $1\leq \nu<2$ and set for $1\leq i\leq I$
$$\tilde R_{i,k}:=\min_{j\neq i} \dist(x_{i,k}, x_{j,k}).$$

\medskip

\noindent\emph{Step 4a.} Let $i\in\{1,\ldots, I\}$ be such that for some $\theta>0$ we have
\begin{equation}\label{theta}
\tilde R_{i,k}\leq \theta \tilde R_{j,k}\quad \textrm{for } 1\leq j\leq I,\; k\geq 1.
\end{equation}
Set
\begin{equation}\label{46'}
\varphi_{i,k}(r):=r^{2m\nu}\exp\bigg( \Intm_{\partial B_r(x_{i,k})}2mu_kd\sigma_g\bigg),
\end{equation}
for $0<r<\ring$, where $d\sigma_g$ is the measure on $\partial B_r(x_{i,k})$ induced by $g$.
Observe that
\begin{equation}\label{phi'}
\varphi_{i,k}'(r\mu_{i,k})<0 \quad \textrm{ if and only if }\quad
r\mu_{i,k}< -\nu\bigg(\Intm_{\partial B_{r\mu_{i,k}}(x_{i,k})} \frac{\partial u_k}{\partial n}d\sigma_g\bigg)^{-1}.
\end{equation}
From \eqref{vkv0} we infer
$$\mu_{i,k}\frac{\partial u_k}{\partial n}\bigg|_{\partial B_{\mu_{i,k} r}(x_{i,k})}\to\frac{\partial }{\partial r}\log\frac{2}{1+r^2}=\frac{-2 r}{1+r^2},$$
hence
$$\mu_{i,k}\Intm_{\partial B_{\mu_{i,k} r}(x_{i,k})}\frac{\partial u_k}{\partial n}d\sigma_g\to -\frac{2r}{1+r^2},\quad \textrm{for }r>0 \textrm{ as }k\to\infty,$$
and \eqref{phi'} implies that for any $R\geq 2R_\nu:=2\sqrt{\frac{\nu}{2-\nu}}$,  there exists $k_0(R)$ such that 
\begin{equation}\label{k_0}
\varphi_{i,k}'(r\mu_{i,k})<0\quad \textrm{for }k\geq k_0(R),\;r\in[2R_\nu,R].
\end{equation}
Define
\begin{equation}\label{defrk}
r_{i,k}:=\sup\Big\{r\in[2R_\nu\mu_{i,k},\tilde R_{i,k}/2]: \varphi_{i,k}'(\rho)<0 \textrm{ for }\rho\in[2R_\nu\mu_{i,k},r) \Big\}.
\end{equation}
From \eqref{k_0} we infer that
\begin{equation}\label{rmu}
\lim_{k\to+\infty}\frac{r_{i,k}}{\mu_{i,k}}=+\infty.
\end{equation}
Let us assume that
\begin{equation}\label{D24}
\lim_{k\to\infty}r_{i,k}=0.
\end{equation}
Consider
\begin{equation}\label{**}
v_{i,k}(y):=u_k(\exp_{x_{i,k}}(r_{i,k} y))-C_{i,k},\quad C_{i,k}:=\Intm_{\partial B_{r_{i,k}}(x_{i,k})}u_kd\sigma_g,
\end{equation}
and let
$$\hat g_{i,k}:=r_{i,k}^{-2}(\exp_{x_{i,k}}\circ T_{i,k})^*g,\quad \hat Q_{i,k}(y):= Q_k(\exp_{x_{i,k}}(r_{i,k} y)),$$
where
$$T_{i,k}(y):= r_{i,k}y\quad\textrm{for } y\in \R{2m}.$$
Then
\begin{eqnarray}
P^{2m}_{\hat g_{i,k}} v_{i,k} + r_{i,k}^{2m} Q_{\hat g_{i,k}}&=&r_{i,k}^{2m}\hat Q_{i,k}e^{2m (v_{i,k}+C_{i,k})} \nonumber\\
&=& r_{i,k}^{2m(1-\nu)}\varphi_{i,k}(r_{i,k})\hat Q_{i,k}e^{2m v_{i,k}}.\label{deltavk}
\end{eqnarray}
We also set
\begin{equation}\label{Ji}
\mathcal{J}_i=\{j\neq i: \dist(x_{i,k},x_{j,k})=O(r_{i,k})\textrm{ as } k\to\infty\},
\end{equation}
and
\begin{equation}\label{D27}
\tilde x_{j,k}^{(i)}:=\frac{1}{r_{i,k}}\exp^{-1}_{x_{i,k}}(x_{j,k}),\quad \tilde x_j^{(i)}=\lim_{k\to\infty} \tilde x_{j,k},
\end{equation}
after passing to a subsequence, if necessary. Thanks to \eqref{theta} and \eqref{defrk}, we have that $|\tilde x_j^{(i)}|\geq 2$ for all $j\in \mathcal{J}_i$ and that
$$|\tilde x_j^{(i)}-\tilde x_\ell^{(i)}|\geq \frac{2}{\theta}\quad \textrm{for all } j,\ell\in\mathcal{J}_i, \;j\neq \ell.$$
By \eqref{ddist} and the choice of $C_{i,k}$ in \eqref{**}, $v_{i,k}$ is uniformly bounded in
$$C^{2m-1}_{\loc}(\R{2m}\backslash \{0, \tilde x_j^{(i)}:j\in \mathcal{J}_i\}).$$
Thanks to \eqref{defrk} and \eqref{rmu}, given $R> 2R_\nu$, there exists $k_0(R)$ such that
$\varphi_{i,k}(r_{i,k})< \varphi_{i,k}(R\mu_{i,k})$ for all $k\geq k_0$.
From \eqref{vkv0}, we infer
\begin{eqnarray}
\mu_{i,k}^{2m}\exp\bigg(\Intm_{\partial B_{R\mu_{i,k}(x_{i,k})}} 2m u_kd\sigma\bigg)&=&\exp\bigg(\Intm_{\partial B_{R\mu_{i,k}}(x_{i,k})} 2m(u_k+\log \mu_{i,k}) d\sigma\bigg)\nonumber\\
&=& C(R)+o(1),\quad \textrm{as } k\to\infty,\label{CR}
\end{eqnarray}
where
\begin{equation}\label{CR2}
C(R)\to 0, \quad \text{as } R\to\infty.
\end{equation}
Then, together with \eqref{rmu}, letting $k\to+\infty$ we get
\begin{eqnarray}
r_{i,k}^{2m(1-\nu)}\varphi_{i,k}(r_{i,k})&\leq &r_{i,k}^{2m(1-\nu)}\varphi_{i,k}(R\mu_{i,k})\nonumber\\
&=&\mu_{i,k}^{2m}\exp\bigg(\Intm_{\partial B_{R\mu_{i,k}(x_{i,k})}} 2m u_kd\sigma\bigg) R^{2m\nu}\bigg(\frac{\mu_{i,k}}{r_{i,k}}\bigg)^{2m(\nu-1)}\nonumber\\
&\to& 0. \label{rphi}
\end{eqnarray}
Therefore the right-hand side of \eqref{deltavk} goes to $0$ locally uniformly in
$$\R{2m}\backslash \{0,\tilde x_j^{(i)}:j\in \mathcal{J}_i\};$$
moreover
\begin{equation}\label{gk0}
\hat g_{i,k}\to g_{\R{2m}} \text{ in } C^k_{\loc}(\R{2m}) \text{ for every } k\geq 0, \quad r_{i,k}^{2m}\hat Q_{i,k}\to 0\text{ in }C^1_{\loc}(\R{2m}).
\end{equation}
It follows that, up to a subsequence,
\begin{equation}\label{vkh}
v_{i,k}\to h_i \textrm{ in } C^{2m-1,\alpha}_{\loc}(\R{2m}\backslash\{0,\tilde x_j^{(i)}:j\in\mathcal{J}_i\}),
\end{equation}
where, taking \eqref{ddist} into account,
$$\Delta^m h_i(x)= 0,\quad x\in\R{2m}\backslash\{0,\tilde x_j^{(i)}:j\in\mathcal{J}_i\}$$
and
$$\tilde R(x)^\ell|\nabla^\ell h_i(x)|\leq C_\ell, \quad \textrm{for }\ell=1,\ldots, 2m-1,\; x\in\R{2m}\backslash\{0,\tilde x_j^{(i)}:j\in\mathcal{J}_i\},$$
with $\tilde R(x):=\min\{|x|,|x-\tilde x_j^{(i)}|:j\in\mathcal{J}_i\}$.
Then Proposition \ref{propliou} from the appendix implies that
\begin{equation}\label{hi}
h_i(x)=-\lambda\log |x|-\sum_{j\in\mathcal{J}_i}\lambda_j \log|x-\tilde x_j^{(i)}|+\beta,
\end{equation}
for some $\lambda,\beta,\lambda_j\in\R{}$. We now recall that the Paneitz operator is in divergence form, hence we can write
\begin{equation}\label{pan}
P^{2m}_{\hat g_{i,k}} v_{i,k}=\diver_{\hat g_{i,k}}(A_{\hat g_{i,k}} v_{i,k})
\end{equation}
for some differential operator $A_{\hat g_{i,k}}$ of order $2m-1$, with coefficients converging to the coefficient of $(-1)^m\nabla\Delta^{m-1}$ uniformly in $B_1$, thanks to \eqref{gk0}. Then integrating \eqref{deltavk}, using \eqref{gk0}, \eqref{vkh} and \eqref{pan}, we get
\begin{eqnarray}
\lim_{k\to\infty}\int_{B_{r_{i,k}}(x_{i,k})}Q_k e^{2mu_k}\dvol_g
&=&\lim_{k\to\infty}\varphi_{i,k} (r_{i,k})r_{i,k}^{2m(1-\nu)}\int_{B_1}\hat Q_{i,k} e^{2mv_{i,k}}\dvol_{\hat g_{i,k}}\nonumber\\
&=&\lim_{k\to\infty}\int_{B_1}\Big(\diver_{\hat g_{i,k}} (A_{\hat g_{i,k}}v_{i,k})+r_{i,k}^{2m} Q_{\hat g_{i,k}}\Big)\dvol_{\hat g_{i,k}}\nonumber\\
&=&\lim_{k\to\infty}\int_{\partial B_1}n\cdot (A_{\hat g_{i,k}}v_{i,k}) d\sigma_{\hat g_{i,k}}\nonumber\\
&=&(-1)^m\int_{\partial B_1}\frac{\partial \Delta^{m-1}h_i}{\partial n}d\sigma =\lambda\frac{\Lambda_1}{2} \label{limuk} ,
\end{eqnarray}
where here $n$ denotes the exterior unit normal to $\de B_1$ and the last identity can be inferred using \eqref{fund} and the following:
\begin{eqnarray*}
\int_{\partial B_1}\frac{\partial \Delta^{m-1}h_i}{\partial n}d\sigma &=&\lambda\int_{\partial B_1}\frac{\partial \Delta^{m-1}\log\frac{1}{|x|}}{\partial n}d\sigma\\
&&+\sum_{j\in \M{J}_i}\lambda_j\int_{B_1}\underbrace{\Delta^m \log\frac{1}{|x-\tilde x_j^{(i)}|}}_{\equiv 0 \; on \; B_1}dx  
\end{eqnarray*}
From \eqref{ddist} with $\ell=1$, we get
\begin{equation}\label{uk}
|u_k(\exp_{x_{i,k}}(r_{i,k} y_1))-u_k(\exp_{x_{i,k}}( r_{i,k} y_2))|\leq Cr_{i,k}r\sup_{\partial B_{r_{i,k}r}(x_{i,k})}|\nabla u_k|\leq C,
\end{equation}
for $0\leq r\leq \frac{3}{2}$, $|y_1|=|y_2|=r$.
For $2R_\nu\mu_{i,k}\leq R\mu_{i,k}\leq r\leq r_{i,k}$, we infer from \eqref{CR}
$$
\varphi_{i,k}(r)\leq \varphi_{i,k}(R \mu_{i,k})
\leq C(R) \mu_{i,k}^{2m(\nu-1)}+o(\mu_{i,k}^{2m(\nu-1)}) .
$$
This, \eqref{46'}, \eqref{CR}, \eqref{CR2} and \eqref{uk} imply that for any $\eta>0$ there exist $R_\eta\geq 2R_\nu$ and $k_\eta\in\mathbb{N}$ such that
\begin{equation}\label{eq47}
\dist(x,x_{i,k})^{2m\nu}e^{2mu_k}\leq\eta \mu_{i,k}^{2m(\nu-1)}\quad \textrm{for } x\in B_{r_{i,k}}(x_{i,k})\backslash B_{R_\eta \mu_{i,k}}(x_{i,k}),\; k\geq k_\eta.
\end{equation}
It now follows easily that
$$\lim_{R\to+\infty}\lim_{k\to\infty}\int_{B_{r_{i,k}}(x_{i,k})\backslash B_{R\mu_{i,k}}(x_{i,k})}Q_ke^{2mu_k}dx=0,$$
and from \eqref{eqA4}
$$\lim_{k\to+\infty}\int_{B_{r_{i,k}}(x_{i,k})}Q_ke^{2mu_k}dx=\Lambda_1.$$
That implies that $\lambda=2$.
With a similar computation, integrating on $B_\delta(\tilde x_j^{(i)})$ for $\delta$ small instead of $B_1(0)$, one proves that $\lambda_j\geq 2$ for all $j\in\mathcal{J}_i$. Now set
$$\overline h_i(r):=\Intm_{\partial B_r(0)}h_id\sigma. $$
Then
$$\frac{d}{dr}\big(r^{2m\nu}e^{2m\overline h_i(r)}\big)=2m\bigg(\nu-2-\bigg(\sum_{j\in\mathcal{J}_i}\frac{\lambda_j}{2|\tilde x_j^{(i)}|^2}\bigg)r^2\bigg)r^{2m\nu-1}e^{2m\overline h_i(r)},$$
for $0<r<\frac{3}{2}$.
In particular
$$\frac{d}{dr}\big(r^{2m\nu}e^{2m\overline h_i(r)}\big)\big|_{r=1}<0$$
hence, for $k$ large enough, $\varphi_{i,k}'(r_{i,k})<0$. This implies that
\begin{equation}\label{D36}
r_{i,k}=\frac{\tilde R_{i,k}}{2} \quad \textrm{for $k$ large.}
\end{equation}
This in turn implies $\lim_{k\to\infty}\tilde R_{i,k}=0$, when $i$ satisfies \eqref{theta} and $\lim_{k\to \infty}r_{i,k}=0$. For $i$ satisfying \eqref{theta} and $\limsup_{k\to\infty} \tilde R_{i,k}>0$, we infer, instead, that $\limsup_{k\to \infty}r_{i,k}>0$. In both cases \eqref{eq47} holds. 

\medskip
 
\noindent\emph{Step 4b.} Now assume that
\begin{equation}\label{limR}
\limsup_{k\to\infty} \tilde R_{i,k}> 0,\quad \text{for every } 1\leq i\leq I.
\end{equation}
Then \eqref{theta} is satisfied for every $1\leq i\leq I$, hence $\limsup_{k\to\infty}r_{i,k}>0$, $1\leq i\leq I$. Up to selecting a subsequence, we can set
$$\delta_\nu:=\inf_{1\leq i\leq I}\frac{1}{2}\lim_{k\to\infty} r_{i,k}>0.$$
Take now $\eta=1$ in \eqref{eq47}, and let $R_1$ be the corresponding $R_\eta$. Then \eqref{nu} is true for $x\in B_{\delta_\nu}(x_{i,k})\backslash B_{R_1\mu_{i,k}}(x_{i,k})$. On the other hand, thanks to ($A_3$), we have
$u_k(x)\leq u_k(x_{i,k})+ C$ on $B_{R_1\mu_{i,k}}(x)$. Then, using \eqref{defmuk}, we get
\begin{eqnarray*}
\dist(x,x_{i,k})^{2m\nu} e^{2mu_k(x)}&\leq& C(R_1\mu_{i,k})^{2m\nu}e^{2mu_k(x_{i,k})}\\
&\leq& CR_1^{2m\nu}\mu_{i,k}^{2m(\nu-1)} \quad \textrm{for }  x\in B_{R_1\mu_{i,k}}(x_{i,k}).
\end{eqnarray*}
This completes the proof of \eqref{nu}, under the assumption that \eqref{limR} holds.

\medskip

\noindent\emph{Step 4c.} We now prove that in fact \eqref{limR} holds true. Choose $1\leq i_0\leq I$ so that, up to a subsequence,
$$\tilde R_{i_0,k}=\min_{1\leq i\leq I}\tilde R_{i,k}\quad \text{for every } k\in\mathbb{N},$$
and assume by contradiction that $\lim_{k\to \infty}\tilde R_{i_0,k}=0.$
Clearly \eqref{theta} holds for $i=i_0$, hence also \eqref{D36} holds for $i=i_0$, by Step 4a. Then, setting $\mathcal{J}_{i_0}$ as is \eqref{Ji},  we claim that, for any $i\in \mathcal J_{i_0}$, there exists $\theta(i)>0$ such that
$$\tilde R_{i,k}\leq \theta(i)\tilde R_{j,k}\quad \textrm{for }1\leq j\leq I.$$
Indeed
$$\tilde R_{i,k}=O(r_{i_0,k})=O(\tilde R_{i_0,k}) \quad\textrm{as } k\to\infty.$$
It then follows that \eqref{theta} holds for all $i\in \mathcal{J}_{i_0}$, and that Step 4a applies to them. Observing that $\mathcal{J}_{i_0}\neq \emptyset$ thanks to Step 4a (Identity \eqref{D36} with $i_0$ instead of $i$), we can pick $i\in \M{J}_{i_0}$ such that, up to a subsequence,
$$\dist(x_{i,k}, x_{i_0,k})\geq \dist (x_{j,k}, x_{i_0,k})\quad \textrm{for all }j\in \M{J}_{i_0},\;k>0.$$
Recalling the definition of $\tilde x_j^{(i)}$ for $j\in \M{J}_i$, we get $|\tilde x_{i_0}^{(i)}|\geq|\tilde x_j^{(i)}-\tilde x_{i_0}^{(i)}|$ for all $j\in \M{J}_{i}$.
%Since
%$$\M{J}_i=(\M{J}_{i_0}\backslash\{i\})\cup\{i_0\},$$
%we have that
%$$|\tilde x_{i_0}|\geq |\tilde x_j-\tilde x_{i_0}|$$
%for all $j\in \M{S}_i$.
A consequence of this inequality is that the scalar product
\begin{equation}\label{D37}
\tilde x_{i_0}^{(i)}\cdot\tilde x_j^{(i)}>0
\end{equation}
for all $j\in\M{J}_i$. In other words all the $\tilde x_j^{(i)}$'s with $j\in \M{J}_i$ lie in the same half space orthogonal to $\tilde x_{i_0}^{(i)}$ and whose boundary contains $0=\tilde x_i^{(i)}$. Multiplying \eqref{deltavk} by $\nabla v_{i,k}$ and integrating over $B_\delta=B_\delta(0)$ ($\delta>0$ small), we get
\begin{eqnarray}
\int_{B_\delta}P^{2m}_{\hat g_{i,k}}v_{i,k}\nabla v_{i,k}\dvol_{\hat g_{i,k}}&=&-\int_{B_\delta}r_{i,k}^{2m}\hat Q_{i,k}\nabla v_{i,k}\dvol_{\hat g_{i,k}}\nonumber \\
&&+\frac{r_{i,k}^{2m(1-\nu)}}{2m}\varphi_{i,k}(r_{i,k})\int_{B_\delta(0)}\hat Q_{i,k}\nabla e^{2m v_{i,k}}\dvol_{\hat g_{i,k}}\nonumber\\
&=:&(I)_k+(II)_k.\label{III}
\end{eqnarray}
Recalling \eqref{gk0} and \eqref{vkh}, we see at once that $\lim_{k\to\infty} (I)_k=0.$
Integrating by parts, we also see that
\begin{eqnarray*}
|(II)_k|&\leq & C\frac{r_{i,k}^{2m(1-\nu)}}{2m}\varphi_{i,k}(r_{i,k})\int_{B_\delta(0)}\frac{\nabla\hat Q_{i,k}}{\hat Q_{i,k} }\hat Q_{i,k}e^{2m v_{i,k}}d\vol_{\hat g_{i,k}}\\
&&+\frac{r_{i,k}^{2m(1-\nu)}}{2m}\varphi_{i,k}(r_{i,k})\int_{\de B_\delta(0)} O(1)d\sigma_{\hat g_{i,k}}\\
&\to& 0\quad \text{as }k\to \infty,
\end{eqnarray*}
where the last term vanishes thanks to \eqref{rphi}, and the first term on the right of $(II)_k$ vanishes thanks to \eqref{limuk} and the
remark that
\begin{equation}\label{nablaQ}
\frac{\nabla\hat Q_{i,k} }{\hat Q_{i,k}}\to 0 \quad \text{in }L^\infty(B_\delta).
\end{equation}
Recalling \eqref{vkh}, using \eqref{ddist} and \eqref{gk0}, we arrive at
\begin{equation}\label{D38}
\int_{B_\delta}\nabla h_i(-\Delta)^m h_i dx=0.
\end{equation}
Let us assume $m$ even. Then, integrating by parts, we get
\begin{eqnarray}
0&=&\frac{1}{2}\int_{\partial B_\delta}((-\Delta)^\frac{m}{2}h_i)^2 n d\sigma\nonumber\\
&&-\sum_{j=0}^{\frac{m}{2}-1}\int_{\partial B_\delta}(\nabla (-\Delta)^j h_i)\frac{\partial (-\Delta)^{m-1-j}h_i}{\de n} d\sigma\label{parti}\\
&&+\sum_{j=0}^{\frac{m}{2}-1}\int_{\partial B_\delta}\nabla \bigg(\frac{\partial(-\Delta)^j h_i}{\de n}\bigg)(-\Delta)^{m-1-j}h_i d\sigma.\nonumber
\end{eqnarray}
Then, taking the limit as $\delta\to 0$, and writing
$$h_i(x)=2\log\frac{1}{|x|}+G_i(x)$$
we see that all terms in \eqref{parti} vanish ($G_i$ is regular in a neighborhood of $0$ and the vector function $\nabla \log\frac{1}{|x|}$ is anti-symmetric), up to at most
$$\lim_{\delta\to 0}\int_{\partial B_\delta}(-\nabla G_i)\partial_\nu(-\Delta)^{m-1}\Big(2\log\frac{1}{|x|}\Big)d\sigma=2\gamma_m\nabla G_i(0),$$
see \eqref{fund}.
But then \eqref{parti} gives
$$2\gamma_m \nabla G_i(0).$$ 
Also when $m$ is odd, in a completely analogous way, we get $\nabla G_i(0)=0$, a contradiction with \eqref{hi} and \eqref{D37}. This ends the proof of Step 4.

\medskip

\noindent \emph{Step 5.} Finally, if case (ii) occurs and $S\neq \emptyset$, then \eqref{eqA4} implies
$$\limsup_{k\to\infty} \mathrm{vol}(g_k)\geq Q_0(x^{(1)})^{-1}\Lambda_1>0.$$
This justifies the last claim of the theorem.
\hfill $\square$

\section{The case $M=S^{2m}$}\label{sectionsphere}

In the case of the $2m$-dimensional sphere, the concentration-compactness of Theorem \ref{trmquant} becomes quite explicit: only one concentration point can appear and, by composing with suitable M\"obius transformations, we have a global understanding of the concentration behavior. This was already noticed in \cite{str} and \cite{MS}, in dimension $2$ and $4$ under the assumption, which we now drop, that the $Q$-curvatures are positive.

\begin{trm}\label{trmquant2} Let $(S^{2m},g)$ be the $2m$-dimensional round sphere, and let $u_k:M\to \R{}$ be a sequence of solutions of 
\begin{equation}\label{deltaT2}
P_{g} u_k+ (2m-1)!=Q_k e^{2m u_k},
\end{equation}
where $Q_k\to Q_0$ in $C^0$ for a given continuous function $Q_0$. Assume also that
\begin{equation}\label{vkT2}
\mathrm{vol}(g_k)=\int_{S^{2m}}e^{2mu_k}\mathrm{dvol}_g=|S^{2m}|,
\end{equation}
where $g_k:=e^{2mu_k}g$. Then one of the following is true.
\begin{enumerate}
\item[(i)] For every $0\leq \alpha<1$, a subsequence converges in $C^{2m-1,\alpha}(S^{2m})$. 
\item[(ii)] There is a point $x_0\in S^{2m}$ such that up to a subsequence $u_k\to -\infty$ locally uniformly in $S^{2m}\bs \{x_0\}$. Moreover $Q_0(x_0)>0$,
$$Q_k e^{2m u_k}\dvol_{g}\rightharpoonup \Lambda_1\delta_{x_0}$$
and there exist M\"obius diffeomorphisms $\Phi_k$ such that the metrics $h_k:=\Phi_k^*g_k$ satisfy
\begin{equation}\label{hk}
h_k\to g\;\textrm{in } H^{2m}(S^{2m}),\quad Q_{h_k}\to (2m-1)!\;\textrm{in }L^2(S^{2m}).
\end{equation}
\end{enumerate}
\end{trm}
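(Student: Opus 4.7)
The plan is to build on Theorem \ref{trmquant} using the specific conformal geometry of $S^{2m}$. First I would apply Theorem \ref{trmquant}, whose hypotheses are met since $\ker P_g=\{\mathrm{constants}\}$ on the round sphere: its case (i) gives case (i) here directly, and in its case (ii) the identity \eqref{intQg} combined with \eqref{lambda1} gives
\[
I\Lambda_1\;=\;\int_{S^{2m}}Q_g\,\dvol_g\;=\;\Lambda_1,
\]
so $I=1$ and there is exactly one concentration point $x_0\in S^{2m}$ with $Q_0(x_0)>0$; the normalization $\vol(g_k)=|S^{2m}|>0$ rules out $\vol(g_k)\to 0$, and the weak convergence $Q_ke^{2mu_k}\dvol_g\rightharpoonup\Lambda_1\delta_{x_0}$ is exactly \eqref{QgL}. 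Moreover, comparing total volume $|S^{2m}|$ with the bubble volume $\lim_R\lim_k\int_{B_{R\mu_k}(x_k)}e^{2mu_k}\dvol_g=(2m-1)!|S^{2m}|/Q_0(x_0)$ extracted from Proposition \ref{blow} (the neck and far-away contributions vanishing since $Q_k$ is uniformly positive near $x_0$ and $u_k\to-\infty$ on $S^{2m}\setminus\{x_0\}$) forces $Q_0(x_0)=(2m-1)!$.

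For the M\"obius transformations I would fix a stereographic projection $\pi:S^{2m}\setminus\{-x_0\}\to\R{2m}$ with $\pi(x_0)=0$, so that $(\pi^{-1})^*g=e^{2\eta_0}g_{\R{2m}}$ with $\eta_0(y)=\log\frac{2}{1+|y|^2}$. Taking $x_k\to x_0$ and $\mu_k\to 0$ from Proposition \ref{blow} (so that $\eta_k(y)=u_k(\exp_{x_k}(\mu_k y))+\log\mu_k\to\eta_0(y)$ in $C^{2m-1,\alpha}_{\loc}(\R{2m})$, the normalization constant vanishing because $Q_0(x_0)=(2m-1)!$), let $\Phi_k$ be the M\"obius diffeomorphism of $S^{2m}$ corresponding in these coordinates to the affine map $y\mapsto a_k+\lambda_k y$, with $a_k:=\pi(x_k)$ and $\lambda_k:=(\mu_k/2)(1+|a_k|^2)$, so that $\lambda_k e^{\eta_0(a_k)}=\mu_k$ (matching the bubble scale to the conformal factor of $\pi^{-1}$ at $a_k$). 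Writing $\Phi_k^*g=e^{2\phi_k}g$, one then has $h_k=\Phi_k^*g_k=e^{2v_k}g$ with $v_k:=u_k\circ\Phi_k+\phi_k$.

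A direct computation in stereographic coordinates yields
\[
v_k\circ\pi^{-1}(y)\;=\;u_k\bigl(\pi^{-1}(a_k+\lambda_k y)\bigr)+\log\lambda_k+\eta_0(a_k+\lambda_k y)-\eta_0(y);
\]
approximating $\pi^{-1}(a_k+\lambda_k y)=\exp_{x_k}(\mu_k y)+O(\mu_k^2)$ and using the profile convergence $\eta_k\to\eta_0$ gives $v_k\to 0$ locally uniformly on $S^{2m}\setminus\{-x_0\}$. Since $\Phi_k$ is a diffeomorphism, $v_k$ solves
\[
P_gv_k+(2m-1)!\;=\;\hat Q_k\,e^{2mv_k},\qquad \hat Q_k:=Q_k\circ\Phi_k,
\]
with $\vol(h_k)=\vol(g_k)=|S^{2m}|$ and $\|\hat Q_k\|_{C^0}\leq C$. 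Applying Theorem \ref{trmquant} to $(v_k)$, case (ii) would force $v_k\to-\infty$ locally uniformly off a finite set, contradicting the local convergence to $0$ on $S^{2m}\setminus\{-x_0\}$; hence case (i) applies and yields a uniform $C^{2m-1,\alpha}(S^{2m})$ bound on $(v_k)$. Combined with the local convergence, this forces $v_k\to 0$ in $C^{2m-1,\alpha}(S^{2m})$, and elliptic regularity for the Paneitz equation promotes this to $v_k\to 0$ in $H^{2m}(S^{2m})$, i.e.\ $h_k\to g$ in $H^{2m}$.

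For the $Q$-curvature convergence, \eqref{gauss} gives $Q_{h_k}=e^{-2mv_k}(P_gv_k+(2m-1)!)$; since $v_k\to 0$ in $H^{2m}\hookrightarrow L^\infty(S^{2m})$ on a $2m$-dimensional manifold, $e^{-2mv_k}\to 1$ uniformly and $P_gv_k\to 0$ in $L^2$, yielding $Q_{h_k}\to(2m-1)!$ in $L^2(S^{2m})$. The main obstacle I anticipate is the no-extra-concentration argument in the third paragraph: one must verify that the M\"obius calibration of the second paragraph is sharp enough that the full $\Lambda_1$-mass is absorbed into the tautological bubble producing $g$ itself under $\Phi_k$, leaving $(v_k)$ genuinely without residual concentration. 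This ultimately relies on the uniqueness, up to M\"obius transformations of $\R{2m}$, of the standard solution $\eta_0$ from \cite{mar1}, together with the identity $\int_{\R{2m}}e^{2m\eta_0}dy=|S^{2m}|$.
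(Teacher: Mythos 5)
Your overall strategy for the second half (explicit bubble-matching M\"obius maps) is genuinely different from the paper's, which instead normalizes the center of mass, $\int_{S^{2m}}x\,\mathrm{dvol}_{h_k}=0$, and obtains \eqref{hk} by quoting the argument of Malchiodi--Struwe \cite{MS}. Your first two paragraphs are essentially sound: Gauss--Bonnet plus \eqref{intQg} gives $I=1$, and the volume normalization \eqref{vkT2} together with ($A_4$), the vanishing of the neck volume and Step 3 does force $Q_0(x_0)=(2m-1)!$, so the calibration of $\Phi_k$ to the bubble scale is coherent. But there are two places where you invoke Theorem \ref{trmquant} outside its hypotheses. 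The first is minor but is exactly the point the paper must address: Theorem \ref{trmquant} assumes $Q_k\to Q_0$ in $C^1(M)$, whereas here only $C^0$ convergence is given, so its hypotheses are \emph{not} met as you claim; the paper gets around this by observing that the $C^1$ assumption enters only through \eqref{nablaQ} in Step 4c (distinctness of the concentration points), which is superfluous once Gauss--Bonnet forces a single bubble. Your proposal skips this verification entirely.

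The serious gap is in your third paragraph. You apply Theorem \ref{trmquant} to the renormalized sequence $(v_k)$ with curvatures $\hat Q_k=Q_k\circ\Phi_k$, but these do not converge in $C^0(S^{2m})$ (let alone $C^1$): since $\lambda_k\to0$, the map $\Phi_k$ sends any fixed small neighborhood of the antipode $-x_0$ onto the complement of a shrinking cap around $x_k$, so on that neighborhood $Q_k\circ\Phi_k$ attains essentially all values of $Q_0$ away from $x_0$ and has no uniformly convergent subsequence unless $Q_0$ is constant. Mere boundedness $\|\hat Q_k\|_{C^0}\le C$ is not enough to run the dichotomy of Theorem \ref{trmquant}, so the step ``case (ii) would force $v_k\to-\infty$, contradiction, hence case (i) gives a global $C^{2m-1,\alpha}$ bound'' is not licensed; this is precisely the ``no residual concentration'' issue you flag at the end, and the proposed fix does not work as written. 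To repair it you would have to argue directly, e.g. note that by the change of variables $x=\Phi_k(y)$, ($A_4$) and \eqref{vanish} imply $\lim_{\delta\to0}\limsup_{k\to\infty}\int_{B_\delta(-x_0)}|\hat Q_k|e^{2mv_k}\dvol_g=0$, so no mass concentrates at $-x_0$, and then prove local uniform bounds near $-x_0$ with only an $L^\infty$ bound on the curvature (a Br\'ezis--Merle/Theorem \ref{a2m} type argument on the manifold), or else follow the paper and take the center-of-mass normalization with the argument of \cite{MS}. Once that step is supplied, your route would indeed give a more self-contained proof of \eqref{hk}, and the final deduction $v_k\to0$ in $H^{2m}$ and $Q_{h_k}\to(2m-1)!$ in $L^2$ is fine.
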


\begin{proof} On the round sphere $P_g=\prod_{i=0}^{m-1}(-\Delta_g +i(2m-i-1))$; moreover $\ker \Delta_g=\{constants\}$ and the non-zero eigenvalues of $-\Delta_g$ are all positive. That easily implies that $\ker P^{2m}_g=\{constants\}$. From Theorem \ref{trmquant}, and the Gauss-Bonnet-Chern theorem, we infer that in case (ii) we have
$$\Lambda_1=\int_{M}Q_g\dvol_g=I\Lambda_1,$$
hence $I=1$, and $Q_ke^{2mu_k}\dvol_g\rightharpoonup \Lambda_1\delta_{x_0}.$ In fact, in order to apply Theorem \ref{trmquant}, we would need $Q_k\to Q_0$ in $C^1(M)$, but this hypothesis is only used in \eqref{nablaQ} in the last part of the proof of Theorem \ref{trmquant}, in order to show that the concentration points are isolated. Since in the case of the sphere only one concentration point appears, that part of the proof is superfluous, and the assumption $Q_k\to Q_0$ in $C^0(M)$ suffices.

To prove the second part of the theorem, for every $k$ we define a M\"obius transformation $\Phi_k:S^{2m}\to S^{2m}$ such that the \emph{normalized metric} $h_k:=\Phi_k^* g_k$ satisfies
$$\int_{S^{2m}}x\dvol_{h_k}=0.$$
Then \eqref{hk} follows by reasoning as in \cite[bottom of Page 16]{MS}.
\end{proof}

\section*{Appendix}

\appendix

\section{A few useful results}

Here we collect a few results which have been used above. For the proofs of Lemma \ref{lemmapiz}, Propositions \ref{c2m} and \ref{propmax}, and Theorem \ref{trmliou}, see e.g. \cite{mar1}.

The following Lemma can be considered a generalized mean value identity for polyharmonic function.

\begin{lemma}[Pizzetti \cite{Piz}]\label{lemmapiz} Let $\Delta^m h=0$, in $B_R(x_0)\subset \R{n}$, for some $m,n$ positive integers. Then there are positive constants $c_i=c_i(n)$ such that
\begin{equation}\label{pizzetti}
\Intm_{B_R(x_0)}h(z)dz =\sum_{i=0}^{m-1}c_iR^{2i}\Delta^i h(x_0).
\end{equation}
\end{lemma}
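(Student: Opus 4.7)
The natural approach is to reduce from ball averages to spherical averages and exploit the ODE that the spherical mean of a polyharmonic function satisfies. Concretely, I would set
$$M(r):=\Intm_{\partial B_r(x_0)}h(z)\,d\sigma(z),\qquad 0<r<R,$$
and first record the standard identity
$$M''(r)+\frac{n-1}{r}M'(r)=\Intm_{\partial B_r(x_0)}\Delta h(z)\,d\sigma(z),$$
which one obtains by writing $M(r)=\fint_{\partial B_1}h(x_0+r\omega)\,d\sigma(\omega)$, differentiating twice in $r$, and applying the divergence theorem. In particular, if we abbreviate $L:=\partial_r^2+\frac{n-1}{r}\partial_r$ and denote by $M_j$ the spherical mean of $\Delta^j h$, then $LM_j=M_{j+1}$ for every $j\geq 0$. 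Iterating $m$ times and using $\Delta^m h\equiv 0$ yields $L^mM\equiv 0$ on $(0,R)$.

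The next step is to solve this ODE among functions smooth at $r=0$. The operator $L$ annihilates constants and sends $r^{2i}$ to $2i(2i+n-2)r^{2i-2}$; hence its kernel among smooth radial functions is one-dimensional, spanned by $1$, and a straightforward induction on $m$ shows that the smooth solutions of $L^mM=0$ on $[0,R)$ form the space of polynomials in $r^2$ of degree at most $m-1$. Therefore
$$M(r)=\sum_{i=0}^{m-1}\tilde c_i\,r^{2i}$$
for certain constants $\tilde c_i$ independent of $r$. To identify these constants I would equate $M(r)$ with its Taylor expansion at $r=0$: expanding $h(x_0+r\omega)$, averaging over $\omega\in S^{n-1}$ (so that all odd-order terms and all off-diagonal even-order terms drop out by symmetry) and using that $\fint_{S^{n-1}}\omega_{i_1}\cdots\omega_{i_{2i}}\,d\sigma$ produces a positive multiple of $\Delta^i h(x_0)$, one finds $\tilde c_i=\kappa_i(n)\,\Delta^i h(x_0)$ with explicitly positive constants $\kappa_i(n)>0$.

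Finally, I would convert from the sphere average back to the ball average by polar coordinates:
$$\Intm_{B_R(x_0)}h(z)\,dz=\frac{n}{R^n}\int_0^R r^{n-1}M(r)\,dr=\sum_{i=0}^{m-1}\frac{n\,\kappa_i(n)}{n+2i}\,R^{2i}\Delta^i h(x_0),$$
so the claimed formula holds with $c_i:=n\kappa_i(n)/(n+2i)>0$. The only mildly delicate point is the second step, namely verifying that every smooth solution of $L^mM=0$ near $0$ is a polynomial in $r^2$ of degree $\leq m-1$; this is handled by induction on $m$, writing $LM$ as a smooth solution of $L^{m-1}=0$ and integrating the first-order ODE $(r^{n-1}M')'=r^{n-1}LM$ twice, discarding the $\log r$ and $r^{2-n}$ singular modes by the smoothness assumption at the origin.
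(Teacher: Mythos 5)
Your argument is correct and complete: the Darboux identity $M''+\frac{n-1}{r}M'=\fint_{\partial B_r}\Delta h\,d\sigma$, the inductive solution of $L^mM=0$ among functions smooth (indeed even) at $r=0$ discarding the $r^{2-n}$ and $\log r$ modes, the identification of the coefficients by Taylor expansion and spherical symmetry, and the final polar-coordinate integration all check out, and they yield positive constants $c_i(n)$ as required. The paper does not prove this lemma itself but defers to the references (Pizzetti's original note and [Mar1]), where the proof is exactly this classical spherical-means argument, so your route is the standard one and nothing further is needed.
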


\begin{prop}\label{c2m} Let $\Delta^{m}h=0$ in $B_{2}\subset\R{n}$. For every $0\leq \alpha<1$, $p\in [1,\infty)$ and $\ell\geq 0$ there are  constants $C(\ell,p)$ and $C(\ell,\alpha)$ independent of $h$ such that
\begin{eqnarray*}
\|h\|_{W^{\ell,p}(B_1)}&\leq &C(\ell,p) \|h\|_{L^1(B_2)}\\
\|h\|_{C^{\ell,\alpha}(B_1)}&\leq& C(\ell,\alpha)  \|h\|_{L^1(B_2)}.
\end{eqnarray*}
\end{prop}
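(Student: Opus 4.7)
The plan is to split the proof into two stages. First I would establish the pointwise bound $\|h\|_{L^\infty(B_{3/2})}\le C\|h\|_{L^1(B_2)}$ by coupling several copies of Pizzetti's identity; then I would bootstrap to bounds on all derivatives using standard interior $L^q$ regularity for the constant-coefficient operator $\Delta^m$.

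For the first stage, fix $x\in B_{3/2}$ and choose any $m$ distinct radii $0<r_1<\cdots<r_m\le 1/2$, so that each $B_{r_j}(x)\subset B_2$. Lemma \ref{lemmapiz} then supplies the $m\times m$ linear system
\begin{equation*}
\Intm_{B_{r_j}(x)}h(z)\,dz=\sum_{i=0}^{m-1}c_i r_j^{2i}\,\Delta^i h(x),\qquad j=1,\ldots,m.
\end{equation*}
The coefficient matrix $(c_i r_j^{2i})$ is, after dividing each column by the nonzero Pizzetti constant $c_i$, a Vandermonde matrix in the variables $r_j^2$; since the $r_j$ are distinct and positive, it is invertible, with inverse bounded by a constant depending only on $m$ and on the fixed choice of radii. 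Solving the system for $\Delta^0 h(x)=h(x)$ gives a representation
\begin{equation*}
h(x)=\sum_{j=1}^m a_j \Intm_{B_{r_j}(x)} h(z)\,dz,
\end{equation*}
and taking absolute values yields $|h(x)|\le C\|h\|_{L^1(B_2)}$ uniformly in $x\in B_{3/2}$.

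For the second stage, observe that since $\Delta^m$ has constant coefficients, every partial derivative $\partial^\beta h$ is again polyharmonic of order $m$. The Agmon--Douglis--Nirenberg interior $L^q$ estimates for the homogeneous equation $\Delta^m h=0$ give $\|h\|_{W^{2m,q}(B_{\rho'})}\le C\|h\|_{L^q(B_\rho)}$ for $\rho'<\rho$ and $q\in(1,\infty)$; iterating this along a finite chain of nested balls $B_1\subset\cdots\subset B_{7/4}$ and applying each step to $\partial^\beta h$ (which remains polyharmonic) promotes it to $\|h\|_{W^{\ell,q}(B_1)}\le C(\ell,q)\|h\|_{L^q(B_{7/4})}$ for every $\ell\ge 0$ and every $q\in(1,\infty)$. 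Combining with the first-stage $L^\infty$ bound gives $\|h\|_{W^{\ell,q}(B_1)}\le C\|h\|_{L^1(B_2)}$; the $p=1$ case reduces to $p=2$ by H\"older, and the $C^{\ell,\alpha}$ estimate follows from $W^{\ell+1,q}$ with $q>n/(1-\alpha)$ via Morrey's embedding.

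The only non-routine point is the first stage: inverting the Vandermonde system built from Pizzetti at $m$ radii is what upgrades a collection of ``averages equal polynomial-in-$r$ expressions in the iterated Laplacians at $x$'' identities into a genuine ``$L^1$ controls $L^\infty$'' bound, with no loss of regularity. Once this is in hand, everything else is an elliptic bootstrap that exploits that $\Delta^m$ is a constant-coefficient elliptic operator whose kernel is closed under partial differentiation.
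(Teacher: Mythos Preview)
Your argument is correct. Note that the paper does not actually give its own proof of Proposition~\ref{c2m}: the appendix says ``For the proofs of Lemma~\ref{lemmapiz}, Propositions~\ref{c2m} and~\ref{propmax}, and Theorem~\ref{trmliou}, see e.g.\ \cite{mar1},'' so there is no in-text proof to compare against. Your Vandermonde inversion of Pizzetti at $m$ distinct radii is a clean and self-contained way to get the crucial $L^1\to L^\infty$ upgrade (and in fact simultaneously yields pointwise control of all $\Delta^i h(x)$, $0\le i\le m-1$), after which the bootstrap via interior ADN estimates and the observation that $\partial^\beta h$ remains polyharmonic is routine.

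One cosmetic slip: you establish the $L^\infty$ bound on $B_{3/2}$ but then feed the bootstrap with $\|h\|_{L^q(B_{7/4})}$, and $B_{7/4}\supset B_{3/2}$. Either run the first stage with radii $r_j\le 1/4$ so that the pointwise bound holds on $B_{7/4}$, or run the second-stage chain of balls inside $B_{3/2}$ rather than $B_{7/4}$. This is purely a matter of matching radii and does not affect the argument.
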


A simple consequence of Lemma \ref{lemmapiz} and Proposition \ref{c2m} is the following Liouville-type Theorem.

\begin{trm}\label{trmliou} Consider $h:\R{n}\to \R{}$ with $\Delta^m h=0$ and $h(x)\leq C(1+|x|^\ell)$ for some integer $\ell\geq 0$. Then $h$ is a polynomial of degree at most $\max\{\ell,2m-2\}$. 
\end{trm}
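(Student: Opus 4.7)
The plan is to upgrade the one-sided growth $h(x)\leq C(1+|x|^\ell)$ into a two-sided $L^1$-growth of the same polynomial order using Pizzetti's formula, and then conclude via the interior estimate of Proposition \ref{c2m} applied to the rescaling $h_R(y):=h(x_0+Ry)$. I set $L:=\max\{\ell,2m-2\}$.

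First I would apply Pizzetti's formula (Lemma \ref{lemmapiz}) centered at an arbitrary $x_0\in\R{n}$, which gives
\begin{equation*}
\int_{B_R(x_0)}h\,dz=|B_R|\sum_{i=0}^{m-1}c_iR^{2i}\Delta^i h(x_0),
\end{equation*}
a polynomial in $R$ of degree at most $n+2(m-1)\leq n+L$ with coefficients depending only on $x_0$. Hence $\big|\int_{B_R(x_0)}h\,dz\big|\leq C(x_0)(1+R^{n+L})$. The hypothesis $h\leq C(1+|x|^\ell)$ trivially yields $\int_{B_R(x_0)}h^+\,dz\leq C(x_0)(1+R^{n+L})$, so
\begin{equation*}
\int_{B_R(x_0)}h^-\,dz=\int_{B_R(x_0)}h^+\,dz-\int_{B_R(x_0)}h\,dz\leq C(x_0)(1+R^{n+L}),
\end{equation*}
whence $\|h\|_{L^1(B_R(x_0))}\leq C(x_0)(1+R^{n+L})$, which is the desired two-sided growth.

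Then, for any integer $j>L$, I would apply Proposition \ref{c2m} to $h_R(y):=h(x_0+Ry)$, which is polyharmonic of order $m$ on $B_2$. Unwinding the scaling gives
\begin{equation*}
|\nabla^j h(x_0)|=R^{-j}|\nabla^j h_R(0)|\leq C(j)R^{-j-n}\|h\|_{L^1(B_{2R}(x_0))}\leq C(x_0,j)R^{L-j}\to 0
\end{equation*}
as $R\to\infty$. Thus $\nabla^j h(x_0)=0$ for every $x_0\in\R{n}$ and every $j>L$, so $h$ is a polynomial of degree at most $L=\max\{\ell,2m-2\}$. The only mildly delicate point will be tracking the $x_0$-dependence of constants in the translation step, but since we only need the asymptotic behavior as $R\to\infty$ with $x_0$ fixed, the $x_0$-dependent constants arising from the values $\Delta^i h(x_0)$ in Pizzetti's formula cause no real obstruction.
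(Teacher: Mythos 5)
Your proof is correct, and it follows exactly the route the paper indicates (and the cited reference carries out): Pizzetti's formula (Lemma \ref{lemmapiz}) converts the one-sided bound into an $L^1$ growth estimate, and the interior estimates for polyharmonic functions (Proposition \ref{c2m}) applied to the rescaled functions force all derivatives of order greater than $\max\{\ell,2m-2\}$ to vanish. Nothing essential is missing.
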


\begin{prop}\label{propmax} Let $u\in C^{2m-1}(\overline{B}_1)$ such that
\begin{equation}\label{maxpr}
\left\{
\begin{array}{ll}
(-\Delta)^m u\leq C &\textrm{in } B_1\\
(-\Delta)^j u\leq C & \textrm{on }\partial B_1 \textrm{ for }0\leq j<m.
\end{array}
\right.
\end{equation}
Then there exists a constant $C$ independent of $u$ such that
$u\leq C$ in $B_1.$
\end{prop}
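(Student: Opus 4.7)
\textbf{Proof plan for Proposition \ref{propmax}.} The plan is to deduce the pointwise bound on $u$ by a descending iteration applied to the functions
\[
v_j := (-\Delta)^j u, \qquad j = 0, 1, \ldots, m-1,
\]
using nothing more than the standard (second-order) weak maximum principle and an explicit barrier. Observe that by definition $-\Delta v_{j-1} = v_j$ for $1 \le j \le m-1$, while $-\Delta v_{m-1} = (-\Delta)^m u \le C$ in $B_1$; moreover each $v_j$ satisfies $v_j \le C$ on $\partial B_1$ by hypothesis \eqref{maxpr}. The idea is to propagate these facts inward, from $j = m-1$ down to $j = 0$.

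First I would establish the following elementary lemma: if $w \in C^2(\overline{B_1})$ satisfies $-\Delta w \le K$ in $B_1$ and $w \le K$ on $\partial B_1$ for some constant $K$, then $w \le K(1 + \tfrac{1}{4m})$ in $B_1$. This follows by comparing with the explicit barrier $\phi(x) := \tfrac{1-|x|^2}{4m}$, which solves $-\Delta \phi = 1$ in $B_1$ and vanishes on $\partial B_1$: the function $w - K\phi - K$ is subharmonic and non-positive on $\partial B_1$, so by the weak maximum principle it is non-positive in $B_1$, giving the claimed bound.

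Next I apply this lemma iteratively. Since $-\Delta v_{m-1} \le C$ in $B_1$ and $v_{m-1} \le C$ on $\partial B_1$, the lemma gives $v_{m-1} \le C_{m-1}$ in $B_1$ for some new constant $C_{m-1}$ depending only on $C$ and $m$. But then $-\Delta v_{m-2} = v_{m-1} \le C_{m-1}$ in $B_1$ and $v_{m-2} \le C$ on $\partial B_1$, so the lemma yields $v_{m-2} \le C_{m-2}$ in $B_1$. Iterating this argument $m$ times produces an estimate $u = v_0 \le C_0$ in $B_1$, where $C_0$ depends only on $C$ and $m$.

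The argument has essentially no obstacle; the only care needed is to verify at each step that the hypothesis of the lemma is met, which is guaranteed by the preceding step together with the boundary bound from \eqref{maxpr}. If $u$ is only $C^{2m-1}$ (so that $(-\Delta)^m u$ must be read in a weaker sense), the same iteration works, interpreting the inequalities distributionally and invoking the weak form of the maximum principle for subsolutions of $-\Delta v \le f \in L^\infty$.
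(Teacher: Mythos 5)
Your argument is correct: reducing to the functions $v_j=(-\Delta)^j u$ and iterating the second-order weak maximum principle with the barrier $\phi(x)=\frac{1-|x|^2}{4m}$ (which indeed satisfies $-\Delta\phi=1$ in $\R{2m}$) gives the bound, and your remark on reading $(-\Delta)^m u\leq C$ distributionally for $u\in C^{2m-1}$ handles the regularity issue, since a continuous distributional subsolution still obeys the maximum principle. The paper itself does not prove Proposition \ref{propmax} but refers to \cite{mar1}, where the proof is of exactly this type, so you have taken essentially the intended route; the only cosmetic point is to normalize $C\geq 0$ (always possible by enlarging $C$) so that the comparison $w\leq K(1+\phi)$ yields the stated constant.
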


\begin{lemma}\label{nn-1} Let $\Delta u\in L^1(\Omega)$ and $u=0$ on $\partial\Omega$, where $\Omega\subset\R{n}$ is a bounded domain. Then for every $1\leq p<\frac{n}{n-1}$ we have
$$\|u\|_{W^{1,p}(\Omega)}\leq C(p)\|\Delta u\|_{L^1(\Omega)}$$
\end{lemma}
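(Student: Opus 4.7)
The plan is to establish the estimate by duality, reducing it to the Morrey embedding $W^{1,p'}\hookrightarrow L^\infty$ on a bounded domain, which holds precisely when $p':=p/(p-1)>n$, i.e.\ when $p<n/(n-1)$. Since $\|u\|_{W^{1,p}(\Omega)}^p=\|u\|_{L^p(\Omega)}^p+\|\nabla u\|_{L^p(\Omega)}^p$, it suffices to bound each summand by $C(p)\|\Delta u\|_{L^1(\Omega)}$.

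For the gradient part, I would start from
$$\|\nabla u\|_{L^p(\Omega)}=\sup\bigg\{\int_\Omega\nabla u\cdot\psi\,dx:\psi\in C_c^\infty(\Omega;\R{n}),\ \|\psi\|_{L^{p'}(\Omega)}\leq 1\bigg\},$$
and for each such $\psi$ introduce the weak solution $v\in W_0^{1,p'}(\Omega)$ of $\Delta v=\diver\psi$ in $\Omega$. Calder\'on--Zygmund estimates give $\|\nabla v\|_{L^{p'}}\leq C\|\psi\|_{L^{p'}}\leq C$, and since $p'>n$, Morrey's embedding together with the Poincar\'e inequality in $W_0^{1,p'}(\Omega)$ yields $\|v\|_{L^\infty(\Omega)}\leq C(p)$. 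Integrating by parts twice and using $u=v=0$ on $\partial\Omega$,
$$\int_\Omega\nabla u\cdot\psi\,dx=-\int_\Omega u\,\diver\psi\,dx=-\int_\Omega u\,\Delta v\,dx=-\int_\Omega v\,\Delta u\,dx,$$
which is bounded by $\|v\|_{L^\infty}\|\Delta u\|_{L^1}\leq C(p)\|\Delta u\|_{L^1(\Omega)}$. Taking the supremum over $\psi$ gives the gradient bound; the $L^p$-estimate on $u$ itself then follows either from Poincar\'e in $W_0^{1,p}(\Omega)$, or from the analogous duality with scalar $\phi\in C_c^\infty(\Omega)$ and $v$ solving $\Delta v=\phi$, $v=0$ on $\partial\Omega$, for which $W^{2,p'}$ regularity and Morrey again give $\|v\|_{L^\infty}\leq C$.

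The main technical point is justifying the repeated integration by parts when $\Delta u$ is merely $L^1$ and $u$ has low a priori regularity. I would handle this by a standard density argument: pick $f_k\in C_c^\infty(\Omega)$ with $f_k\to\Delta u$ in $L^1(\Omega)$, solve $\Delta u_k=f_k$ in $\Omega$ with $u_k=0$ on $\partial\Omega$, so that each $u_k$ is smooth enough that the duality computation above is classical, derive $\|u_k\|_{W^{1,p}(\Omega)}\leq C(p)\|f_k\|_{L^1(\Omega)}$, and pass to the limit, identifying the limit with $u$ via uniqueness of very weak (Stampacchia) solutions. Any boundary regularity implicitly needed on $\Omega$ enters only through the standard elliptic solvability of the adjoint problem for $v$.
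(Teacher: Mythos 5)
Your proof is correct and takes essentially the same route as the paper: a duality argument in which the dual test function is controlled in $L^\infty$ through the supercritical embedding $W^{1,p'}_0\hookrightarrow L^\infty$ (valid since $p'>n$ exactly when $p<\frac{n}{n-1}$), then integration by parts to pair with $\Delta u$, Poincar\'e for the $L^p$ norm of $u$, and a smoothing/approximation step to handle the low regularity. The only cosmetic difference is that the paper invokes Simader's variational inequality $\|\nabla u\|_{L^p(\Omega)}\leq C\sup\big\{\int_\Omega\nabla u\cdot\nabla\varphi\,dx:\varphi\in W^{1,q}_0(\Omega),\ \|\nabla\varphi\|_{L^q(\Omega)}\leq 1\big\}$ directly, whereas you implement the same duality by solving the auxiliary problem $\Delta v=\diver\psi$ with Calder\'on--Zygmund estimates; both versions carry the same implicit boundary-regularity requirement, which is harmless in the paper's application on balls.
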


\begin{proof} Let $u\in C^\infty(\overline \Omega)$ and $u|_{\partial \Omega}=0$. If $1\leq p<\frac{n}{n-1}$, then $q:=\frac{p}{p-1}>n$. 
From $L^p$-theory (see e.g. \cite[Pag. 91]{Sim}) and the imbedding $W^{1,q}\hookrightarrow L^\infty$ we infer
\begin{eqnarray*}
\|\nabla u\|_{L^p(\Omega)}&\leq& C\sup_{\substack{\varphi\in W^{1,q}_0(\Omega)\\ \|\nabla\varphi\|_{L^q(\Omega)}\leq 1}}\int_{\Omega}\nabla u\cdot\nabla\varphi dx = C\sup_{\substack{\varphi\in W^{1,q}_0(\Omega)\\ \|\nabla\varphi\|_{L^q(\Omega)}\leq 1}}\int_\Omega -\Delta u\varphi dx\\
&\leq&C\sup_{\substack{\varphi\in L^\infty(\Omega)\\ \|\varphi\|_{L^\infty(\Omega)}\leq 1}}\int_\Omega -\Delta u\varphi dx\leq C\|\Delta u\|_{L^1}.
\end{eqnarray*}
To estimate $\|u\|_{L^p(\Omega)}$ we use Poincar\'e's inequality.
For the general case one can use a standard mollifying procedure.
\end{proof}

\medskip

\noindent\emph{Proof of Lemma \ref{propmu}.}
By Lemma \ref{nn-1}, $\|\Delta^{m-1}u\|_{W^{1,r}(\Omega)}\leq C(r) \|f\|_{L^1(\Omega)}$ for $1\leq r<\frac{2m}{2m-1}$. Then, by $L^p$-theory, $\|u\|_{W^{2m-1,r}(\Omega)}\leq C(r)\|f\|_{L^1(\Omega)}$, and by Sobolev's embedding,
\begin{equation}\label{Ls}
\|u\|_{L^s(\Omega)}\leq C(s)\|f\|_{L^1(\Omega)}, \quad \textrm{for all } 1\leq s<\infty.
\end{equation}
Now fix $B=B_{4R}(x_0)\subset\subset (\Omega\bs S_1)$ and write $u=u_1+u_2$, where
$$\left\{
\begin{array}{ll}
(-\Delta)^m u_2=f & \textrm{in }B_{4R}(x_0)\\
\Delta^j u_2=0 &\textrm{on }\partial B_{4R}(x_0) \textrm{ for } 0\leq j\leq m-1.
\end{array}
\right.$$
By $L^p$-theory
\begin{equation}\label{W2m}
\|u_2\|_{W^{2m,p}(B_{4R}(x_0))}\leq C(p,B)\|f\|_{L^p(B_{4R}(x_0))},
\end{equation}
with $C(p,B)$ depending on $p$ and the chosen ball $B$.
Together with \eqref{Ls}, we find
$$\|u_1\|_{L^1(B_{4R}(x_0))}\leq C(p,B) (\|f\|_{L^p(B_{4R}(x_0))}+\|f\|_{L^1(\Omega)}).$$ By Proposition \ref{c2m} 
$$\|u_1\|_{W^{2m,p}(B_R(x_0))}\leq  C(p,B) (\|f\|_{L^p(B_{4R}(x_0))}+\|f\|_{L^1(\Omega)}),$$
and \eqref{21*} follows. \hfill$\square$

\begin{prop}\label{propliou} Let $S=\{x_1,\ldots ,x_I\}\subset\R{2m}$ be a finite set and let $h\in C^\infty(\R{2m}\backslash S)$ satisfy $\Delta^m h=0$ and
\begin{equation}\label{nablah}
\dist(x,S)|\nabla h(x)|\leq C,\quad \text{for }x\in \R{2m}\backslash S.
\end{equation}
Then there are constants $\beta$ and  $\lambda_i$, $1\leq i\leq I$,  such that
\begin{equation}\label{h(x)}
h(x)=\sum_{i=1}^I \lambda_i \log\frac{1}{|x-x_i|}+\beta.
\end{equation}
\end{prop}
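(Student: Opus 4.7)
The plan is a subtract-then-Liouville strategy. First I would upgrade the gradient bound to a pointwise logarithmic bound on $h$ itself. Integrating the inequality $|\nabla h(x)|\le C/\dist(x,S)$ along a radial segment toward any $x_i\in S$ yields $|h(x)|\le C(1+|\log|x-x_i||)$ on a neighborhood of $x_i$, while for $|x|$ large (where $\dist(x,S)\sim|x|$) radial integration gives $|h(x)|\le C(1+\log|x|)$. In particular $h\in L^1_{\loc}(\R{2m})$, so it defines a distribution on all of $\R{2m}$, and since $h$ is smooth and polyharmonic away from $S$, the distribution $\Delta^m h$ is supported on the finite set $S$.

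The key step is to identify $\Delta^m h$ as a sum of Dirac masses (rather than derivatives of deltas) at the $x_i$'s. By the structure theorem for compactly supported distributions,
$$\Delta^m h=\sum_{i=1}^I\sum_{|\alpha|\le N_i}c_{i,\alpha}D^\alpha\delta_{x_i},$$
and I must show $c_{i,\alpha}=0$ whenever $|\alpha|\ge 1$. To do so I would test against rescaled bumps $\phi_\lambda(x):=\phi(\lambda^{-1}(x-x_i))$, $\phi\in C_c^\infty(\R{2m})$. A change of variables gives
$$\langle\Delta^m h,\phi_\lambda\rangle=\int h(x)\,\Delta^m\phi_\lambda(x)\,dx=\int h(x_i+\lambda y)\,\Delta^m\phi(y)\,dy,$$
which by the first step is $O(1+|\log\lambda|)$ as $\lambda\to 0^+$. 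On the other hand, direct distributional pairing yields $\sum_\alpha(-1)^{|\alpha|}c_{i,\alpha}\lambda^{-|\alpha|}D^\alpha\phi(0)$ (the contributions of the other $x_j$, $j\neq i$, vanish for $\lambda$ small, as $\phi_\lambda$ and its derivatives vanish there). Choosing $\phi$ so that $D^\alpha\phi(0)$ isolates a prescribed multi-index of maximal length $N_i$ and letting $\lambda\to 0^+$ forces $c_{i,\alpha}=0$ for that $\alpha$; iterating downward in $|\alpha|$ kills all terms with $|\alpha|\ge 1$.

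With $\Delta^m h=\sum_i c_{i,0}\,\delta_{x_i}$, \eqref{fund} allows one to choose constants $\lambda_i$ proportional to $c_{i,0}$ so that
$$\tilde h(x):=h(x)-\sum_{i=1}^I\lambda_i\log\frac{1}{|x-x_i|}$$
satisfies $\Delta^m\tilde h=0$ distributionally on $\R{2m}$. Elliptic regularity promotes $\tilde h$ to $C^\infty(\R{2m})$, and the growth bounds from the first step, combined with $|\log(1/|x-x_i|)|\le C(1+\log(2+|x|))$ outside a compact set, give $|\tilde h(x)|\le C(1+\log(2+|x|))$ on all of $\R{2m}$. Applying Theorem \ref{trmliou} to $\tilde h$ and to $-\tilde h$ (with $\ell=1$, say) shows $\tilde h$ is a polynomial of degree at most $2m-2$. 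Since any nonconstant polynomial eventually dominates $\log|x|$ along some ray, $\tilde h\equiv\beta$ is constant, and the stated representation follows.

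The main obstacle is the middle step: the gradient hypothesis $\dist(x,S)|\nabla h|\le C$ is used in an essential, quantitative way precisely to exclude the stronger polyharmonic singular modes of size $|x-x_i|^{-|\alpha|}$ (i.e.\ the derivatives $D^\alpha\log|x-x_i|$ for $|\alpha|\ge 1$) that $\Delta^m$ a priori admits in dimension $2m$. An equivalent way to organize this step is to expand $h$ in spherical harmonics around each $x_i$ and verify that among the radial singular modes of $\Delta^m$, only $\log r$ is compatible with the bound $|h|\lesssim|\log r|$.
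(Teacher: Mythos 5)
Your proof is correct and follows essentially the same route as the paper: show that $\Delta^m h$ is a combination of Dirac masses at the points of $S$, subtract the corresponding multiples of $\log\frac{1}{|x-x_i|}$ using \eqref{fund}, and conclude via Theorem \ref{trmliou} and the logarithmic growth bound that the remainder is constant. Your scaling argument with the rescaled test functions $\phi_\lambda$, which rules out derivative-of-delta contributions, spells out a step that the paper's proof asserts without detail, so it is a welcome addition rather than a deviation.
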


\begin{proof} Thanks to $\eqref{nablah}$, $h\in L^1_{\loc}(\R{2m})$, so that $\Delta^m h$ is well defined in the sense of distributions and it is supported in $S$. Therefore
$$\Delta^m h=\sum_{i=1}^I \beta_i\delta_{x_i},$$
for some constants $\beta_i$. Then, recalling \eqref{fund}, if we set
$$v(x):=h(x)-\sum_{i=1}^I \lambda_i\log \frac{1}{|x-x_i|}, \quad \lambda_i:=(-1)^m\frac{\beta_i}{\gamma_m},$$
we get $\Delta^m v\equiv 0$ in $\R{2m}$ in the sense of distributions (hence $v$ is smooth) and 
\begin{equation}\label{nablav}
|\nabla v(x)||x|\leq C\quad \text{in }\R{2m}.
\end{equation}
Then $|v(x)|\leq C(\log(1+|x|)+1)$. By Theorem \ref{trmliou} $v$ is a polynomial, which \eqref{nablav} forces to be constant, say $v\equiv -\beta$. Now \eqref{h(x)} follows at once.
\end{proof}

\end{document}